\documentclass[11pt]{article}

\usepackage[a4paper,margin=1in]{geometry}
\usepackage{amsmath,amssymb,amsthm,mathtools,bm,mathrsfs}
\usepackage{microtype}
\usepackage{cite}
\usepackage[hidelinks]{hyperref}

\numberwithin{equation}{section}

\newtheorem{theorem}{Theorem}[section]
\newtheorem{lemma}[theorem]{Lemma}
\newtheorem{proposition}[theorem]{Proposition}
\newtheorem{corollary}[theorem]{Corollary}
\theoremstyle{definition}

\theoremstyle{remark}
\newtheorem{remark}[theorem]{Remark}

\newcommand{\R}{\mathbb{R}}
\newcommand{\C}{\mathbb{C}}
\newcommand{\eps}{\varepsilon}
\newcommand{\pa}{\partial}
\newcommand{\Om}{\Omega}
\newcommand{\esssupp}{\operatorname{ess\,supp}}
\newcommand{\cL}{\mathcal{L}}
\newcommand{\cA}{\mathcal{A}}
\newcommand{\cH}{\mathcal{H}}
\newcommand{\cQ}{\mathcal{Q}}
\newcommand{\cB}{\mathcal{B}}
\newcommand{\bfu}{\bm{u}}
\newcommand{\bfv}{\bm{v}}
\newcommand{\bfw}{\bm{w}}
\newcommand{\bff}{\bm{f}}
\newcommand{\bfg}{\bm{g}}
\newcommand{\bfF}{\bm{F}}
\newcommand{\bfG}{\bm{G}}
\newcommand{\bfH}{\bm{H}}
\newcommand{\bfh}{\bm{h}}
\newcommand{\bfM}{\bm{M}}
\newcommand{\bfC}{\bm{C}}
\newcommand{\bfU}{\bm{U}}
\newcommand{\bfV}{\bm{V}}
\newcommand{\bfW}{\bm{W}}
\newcommand{\bfa}{\bm{a}}
\newcommand{\bfb}{\bm{b}}
\newcommand{\bfGamma}{\bm{\Gamma}}
\newcommand{\bfK}{\bm{K}}
\newcommand{\bfR}{\bm{R}}
\newcommand{\bfZ}{\bm{Z}}
\newcommand{\bfE}{\bm{E}}
\newcommand{\bfnu}{\bm{\nu}}
\newcommand{\Tnu}{\bm{T}_{\nu}}

\title{Low-Frequency Passive Identification of Structured Elastic Density and Initial States}

\author{
Yixian Gao\thanks{School of Mathematics and Statistics, Center for Mathematics and Interdisciplinary Sciences,
Northeast Normal University, Changchun, Jilin 130024, China.
Email: \texttt{gaoyx643@nenu.edu.cn}.}
\and
Hongyu Liu\thanks{Department of Mathematics, City University of Hong Kong,
Kowloon, Hong Kong, China.
Email: \texttt{hongyu.liuip@gmail.com, hongyliu@cityu.edu.hk}.}
\and
Yang Liu\thanks{School of Mathematics and Statistics, Center for Mathematics and Interdisciplinary Sciences,
Northeast Normal University, Changchun, Jilin 130024, China.
Email: \texttt{liuy694@nenu.edu.cn}.}
}

\date{}

\begin{document}

\maketitle

\begin{abstract}
We study simultaneous identification of the mass density, initial displacement, and initial
velocity for the three-dimensional isotropic elastic wave equation with known constant
Lam\'e parameters.  The density equals a known positive constant outside an interior set.
The data are the complete displacement trace on an enclosing boundary.  We first assume
that the density-weighted initial displacement and velocity have fixed known profiles in one
spatial direction.  The $s^0$ and $s^1$ coefficients of the zero-frequency Laplace
expansion identify these weighted states.  The $s^2$ and $s^3$ coefficients then give
static Lam\'e orthogonality identities for the density difference.  The exact difference
expansion through order $s^3$ has an $O(s^4)$ remainder, uniformly on bounded spatial sets
and bounded density-contrast and weighted-state classes.  Under alignment of the two
initial states and a nonzero moment of the density-weighted initial velocity, the two
density identities reduce to a constant-vector static transform.  Two opposite elastic
null phases give uniqueness for one or two fixed vertical density profiles when the profile
family is two-sided Laplace nondegenerate.  When $\lambda+\mu\ne0$, each nonzero
normal root has partial multiplicities $2$ and $1$ and admits a length-two Jordan chain.
The associated polynomial--exponential Lam\'e mode produces derivatives of the bilateral
profile transforms.  The resulting Hermite--Laplace system gives uniqueness for aligned
classes with up to four fixed vertical profiles and independent horizontal coefficients,
provided that the profile system is Hermite--Laplace nondegenerate.
Distinct translations of one compactly supported profile provide an explicit four-profile
class.  We also prove rigidity of the alignment reduction and exhibit an infinite-dimensional
kernel for the reduced transform on unrestricted densities.
\end{abstract}

\medskip
\noindent\textbf{Keywords.}
Passive inverse problem; elastodynamics; simultaneous identification; low-frequency
asymptotics; modified Kupradze tensor; generalized static Lam\'e modes; structured density.

\medskip
\noindent\textbf{2020 Mathematics Subject Classification.}
35R30 (primary); 35L51, 74J20, 74B05 (secondary).

\section{Introduction}

\subsection{Problem and principal results}

Let $\Om\subset\R^3$ be a bounded Lipschitz domain and let $Q$ be a bounded open set with
$\overline Q\subset\Om$.  We consider the whole-space Cauchy problem
\begin{equation}
\label{eq:main-time}
 \rho(x)\pa_t^2\bfu(x,t)-\cL_{\lambda,\mu}\bfu(x,t)=\bm{0},
 \qquad (x,t)\in\R^3\times(0,\infty),
\end{equation}
where
\[
 \cL_{\lambda,\mu}\bfv
 :=\mu\Delta\bfv+(\lambda+\mu)\nabla(\nabla\cdot\bfv),
 \qquad
 \mu>0,
 \quad
 \lambda+2\mu>0,
\]
and
\begin{equation}
\label{eq:initial-data}
 \bfu(x,0)=\bff(x),
 \qquad
 \pa_t\bfu(x,0)=\bfg(x).
\end{equation}
The Lam\'e parameters are known constants.  The unknown density equals a known constant
$\rho_0>0$ outside $Q$, and the initial data are supported in $Q$.  The passive observation is
\[
 \mathscr M_{\rho,\bff,\bfg}
 :=\left.\bfu\right|_{\pa\Om\times(0,\infty)}.
\]
By the trace theorem and Proposition~\ref{prop:forward}, this is an element of
$C([0,\infty);H^{1/2}(\pa\Om)^3)$.
Because the exterior coefficients and exterior initial data are known, equality of these
traces determines equality of the exterior Laplace fields.  This standard exterior fact is
used only to compare the coefficients of the zero-frequency expansion.

Set $\cA_{\lambda,\mu}:=-\cL_{\lambda,\mu}$ and introduce
\[
 \bfF:=\rho\bff,
 \qquad
 \bfG:=\rho\bfg,
 \qquad
 q:=\rho-\rho_0.
\]
For $s>0$, let $\widetilde{\bfu}(\cdot,s)$ denote the Laplace transform of the
displacement.  It satisfies
\[
 (\cA_{\lambda,\mu}+\rho_0s^2)\widetilde{\bfu}
 =\bfG+s\bfF-s^2q\widetilde{\bfu}.
\]
Thus, after $(\bfF,\bfG)$ are treated as the first-stage unknowns, the independent density
contrast $q$ first enters explicitly through $-s^2q\widetilde{\bfu}$.

Assume first that $Q=D\times I$, where $D\subset\R^2$ is bounded and open and $I$ is a
bounded interval.  Write $x=(x',x_3)$ with $x'\in\R^2$.  Suppose that the weighted
initial states have fixed known profiles in the distinguished direction,
\[
 \rho_j\bff_j=\bfF_j^{\flat}(x')\psi_f(x_3),
 \qquad
 \rho_j\bfg_j=\bfG_j^{\flat}(x')\psi_g(x_3).
\]
Theorem~\ref{thm:weighted} shows that one passive boundary history uniquely determines
$\rho\bff$ and $\rho\bfg$.  The proof uses the vector structure of the Lam\'e system.
For a horizontal frequency $\eta\ne\bm{0}$, one elastic null phase leaves a residual
polarization line.  The two normal phases with exponents $\pm|\eta|$ remove it.

After the weighted states have been identified, define
\[
 \bfU_0:=\bfGamma_0*\bfG,
 \qquad
 \bfU_1:=\bfGamma_0*\bfF-\gamma_1\int_Q\bfG(y)\,dy,
\]
where $\bfGamma_0$ is the Kelvin tensor and $\gamma_1>0$ is the coefficient of the linear
term in the modified Kupradze expansion.  Equal passive data then imply
\begin{equation}
\label{eq:intro-density-identities}
 \int_Q(\rho_1-\rho_2)\bfU_0\cdot\bfv\,dx=0,
 \qquad
 \int_Q(\rho_1-\rho_2)\bfU_1\cdot\bfv\,dx=0,
 \qquad \bfv\in\cH_{\lambda,\mu}.
\end{equation}
Here $\cH_{\lambda,\mu}$ denotes the space of globally defined static Lam\'e fields.
The second identity is derived from the exact difference of the two
Lippmann--Schwinger equations through order $s^3$.  The remainder is
$O(s^4)$ in $L^\infty$ on bounded sets.  The vector moment at order $s^3$ vanishes by the
preceding identity.

The identities \eqref{eq:intro-density-identities} give two types of density result.  In a
finite-dimensional family, Theorem~\ref{thm:finite-rank} gives a real-rank criterion that
can be checked after the weighted state has been identified.  For an infinite-dimensional
multi-profile class, we assume the algebraic
alignment
\begin{equation}
\label{eq:intro-aligned-class}
 \bff=\beta\bfg,
 \qquad
 \int_Q\rho\bfg\,dx\ne\bm{0}.
\end{equation}
Here $\beta\in\R$.  Then $\bfU_1-\beta\bfU_0$ is a nonzero constant vector.  For one or
two prescribed vertical density profiles, the two pure normal phases provide enough scalar
equations when the profiles are two-sided Laplace nondegenerate.  For three
or four profiles, an additional normal-mode structure is used.  Fixing $\eta\ne\bm{0}$ and writing
\[
 \zeta(r)=(i\eta_1,i\eta_2,r),
\]
the static Lam\'e equation has the normal matrix polynomial
\[
 \mathscr P_\eta(r)
 =\mu(r^2-|\eta|^2)I_3+(\lambda+\mu)\zeta(r)\zeta(r)^\top.
\]
When $\lambda+\mu\ne0$, each root $r=\pm|\eta|$ has algebraic multiplicity three and
geometric multiplicity two and admits a length-two Jordan chain.  Its associated
polynomial--exponential Lam\'e mode produces the first derivative of the bilateral profile
transform at that root.  The two roots therefore provide the transform values and first
derivatives at both signs, leading to the two-sided Hermite--Laplace matrix.
Theorem~\ref{thm:global} proves injectivity for Hermite--Laplace nondegenerate aligned classes
containing up to four fixed vertical profiles.  Distinct translations of one compactly
supported profile give an explicit nondegenerate four-profile family.

The use of a Jordan chain here follows the standard spectral terminology for regular matrix
polynomials \cite{GohbergLancasterRodman2009}.  Polynomial--exponential solutions associated
with such chains are classical at the level of constant-coefficient systems.  The new point
in this paper is their explicit realization for the static Lam\'e normal polynomial and their
use to generate the derivative rows needed for passive multi-profile identifiability.

The alignment in \eqref{eq:intro-aligned-class} is structural and is not claimed to be
generic.  Proposition~\ref{prop:alignment-rigidity} shows that replacing the resulting
constant vector by a factor $\bfM\chi(x_3)$ on the whole cylinder gives no larger class.
Compact support forces the exact alignment again.
Proposition~\ref{prop:general-density-obstruction} exhibits an infinite-dimensional kernel for the
reduced constant-vector transform on unrestricted densities.  Neither result rules out
larger identifiable classes based on the full pair $(\bfU_0,\bfU_1)$.  The present results
are qualitative and use the complete time history; finite-time identification and stability
from noisy traces remain open.

\subsection{Relation to previous work}

Inverse coefficient problems for the nonstationary Lam\'e system have been studied through
Carleman estimates, dynamical boundary maps, high-frequency constructions, and nonlinear
interaction methods; see
\cite{ImanuvilovYamamoto2005,BellassouedImanuvilovYamamoto2008,Rachele2003,
DeHoopNakamuraZhai2017,Bhattacharyya2018,DeHoopNakamuraZhai2019,
BhattacharyyaDeHoopKatsnelsonUhlmann2022SIIMS,UhlmannZhai2024,Zhai2026}.
Inverse elastic sources and nonradiating sources are treated in
\cite{Tittelfitz2012,BaoHuKianYin2018,HuKian2020,BlastenLin2019}.

For scalar waves, simultaneous passive source--medium identification includes
\cite{StefanovUhlmann2013,LiuUhlmann2015,KnoxMoradifam2020,Feizmohammadi2025,KianUhlmann2025}.
Kian and Liu \cite{KianLiu2025} prove uniqueness and H\"older stability from a single passive
boundary measurement for broad piecewise-constant sound speeds, without a temporal-decay
hypothesis.  Moradifam \cite{Moradifam2026} obtains global uniqueness and Lipschitz
stability under a constitutive source--speed relation and geometric visibility assumptions.
The evolutionary framework of Chen, Jiang, Liu, Lo, and Tao
\cite{ChenJiangLiuLoTao2026} combines low- and high-frequency information and treats
several jointly unknown quantities under structural hypotheses.  The absence of a decay
assumption is therefore not the distinguishing claim of the present paper.

The plate problem in \cite{GaoLiuLiu2023} also uses passive data and low-frequency
asymptotics.  Three differences matter in elasticity.  First, the background resolvent is matrix valued.
Second, static exponential amplitudes satisfy the polarization constraint
$\zeta\cdot\bfa=0$.  Third, one null phase leaves a one-dimensional polarization line.
The opposite phase removes this line.  The nonsemisimple normal root also produces a
polynomial--exponential mode and hence a transform derivative.  We combine these elastic features with the exact order-$s^3$ difference identity.  This
yields the passive-identification results proved below.

Section~\ref{sec:setting} gives the functional setting and states the principal results.
Section~\ref{sec:static} develops the static potential theory, the normal matrix polynomial,
and the pure and generalized completeness arguments.  Section~\ref{sec:laplace} derives
the transformed equation and identifies the weighted initial state.
Section~\ref{sec:density} proves the two density identities and their uniqueness consequences.
Section~\ref{sec:scope} records the limitations and open problems.  Technical kernel and
resolvent arguments are placed in the appendices.

\section{Setting and principal statements}
\label{sec:setting}

\subsection{Notation and the whole-space forward problem}

We write $\R_+=(0,\infty)$, $i=\sqrt{-1}$,
$x=(x',x_3)$ with $x'=(x_1,x_2)$, $e_3=(0,0,1)^\top$, and $I_3$ for the
$3\times3$ identity matrix.  For complex vectors,
$\bfa\cdot\bfb=\sum_{k=1}^3a_kb_k$ denotes the complex bilinear Euclidean product;
conjugation is written explicitly in sesquilinear forms.  For matrices,
$A:B=\sum_{j,k}A_{jk}B_{jk}$, and $|\cdot|$ denotes the Euclidean or Frobenius norm,
as appropriate.  The Fourier transform is
\begin{equation*}
 \widehat h(\xi):=\int_{\R^d}e^{-ix\cdot\xi}h(x)\,dx,
 \qquad
 h\in L^1(\R^d),
\end{equation*}
with inverse factor $(2\pi)^{-d}$.  All inequalities between measurable coefficients hold
almost everywhere.

For $\bfv,\bfw\in C_c^\infty(\R^3)^3$, set
\begin{equation*}
 a(\bfv,\bfw)
 :=\int_{\R^3}
 \left[
 \lambda(\nabla\cdot\bfv)(\nabla\cdot\overline{\bfw})
 +2\mu\eps(\bfv):\eps(\overline{\bfw})
 \right]dx,
\end{equation*}
where $\eps(\bfv)=\tfrac12(\nabla\bfv+(\nabla\bfv)^\top)$.  The form extends
continuously to $H^1(\R^3)^3$.  With
\[
 P_L(\xi)=\frac{\xi\xi^\top}{|\xi|^2},
 \qquad
 P_T(\xi)=I_3-P_L(\xi),
 \qquad \xi\ne0,
\]
Plancherel's theorem gives
\begin{equation}
\label{eq:form-Fourier}
 a(\bfv,\bfv)
 =(2\pi)^{-3}\int_{\R^3}
 \left[
 \mu|\xi|^2|P_T(\xi)\widehat{\bfv}(\xi)|^2
 +(\lambda+2\mu)|\xi|^2|P_L(\xi)\widehat{\bfv}(\xi)|^2
 \right]d\xi.
\end{equation}

In particular, there are constants $0<c_{\mathrm{el}}\le C_{\mathrm{el}}$ such that
\begin{equation*}
 c_{\mathrm{el}}\|\nabla\bfv\|_{L^2}^2
 \le a(\bfv,\bfv)
 \le C_{\mathrm{el}}\|\nabla\bfv\|_{L^2}^2,
 \qquad \bfv\in H^1(\R^3)^3.
\end{equation*}

\begin{proposition}
\label{prop:forward}
Let $\rho\in L^\infty(\R^3)$ satisfy
$0<\rho_-\le\rho\le\rho_+<\infty$.  For
$\bff\in H^1(\R^3)^3$ and $\bfg\in L^2(\R^3)^3$, the Cauchy problem
\eqref{eq:main-time}--\eqref{eq:initial-data} has a unique solution
\[
 \bfu\in C([0,\infty);H^1(\R^3)^3)
       \cap C^1([0,\infty);L^2(\R^3)^3).
\]
The quadratic energy
\begin{equation}
\label{eq:elastic-energy}
 E_{\bfu}(t)
 :=\frac12\int_{\R^3}
 \left[
 \rho|\pa_t\bfu|^2
 +\lambda|\nabla\cdot\bfu|^2
 +2\mu|\eps(\bfu)|^2
 \right]dx
\end{equation}
is independent of $t$ and nonnegative.  Moreover,
\begin{equation}
\label{eq:H1-growth}
 \|\bfu(t)\|_{H^1(\R^3)}
 +\|\pa_t\bfu(t)\|_{L^2(\R^3)}
 \le C(1+t),
 \qquad t\ge0,
\end{equation}
where $C$ depends only on $\lambda,\mu,\rho_-,\rho_+$ and the norms of the initial data.
Consequently the $H^1$-valued Laplace transform exists for every $s>0$.
\end{proposition}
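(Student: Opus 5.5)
The plan is to treat \eqref{eq:main-time}--\eqref{eq:initial-data} as an abstract second-order Cauchy problem in the weighted Hilbert space $L^2_\rho:=L^2(\R^3)^3$ with inner product $(\bfv,\bfw)_\rho=\int\rho\,\bfv\cdot\overline{\bfw}\,dx$. Since $\rho_-\le\rho\le\rho_+$, this norm is equivalent to the $L^2$ norm. The operator will be $B:=\rho^{-1}\cA_{\lambda,\mu}$, defined through the closed, symmetric, nonnegative form $a(\cdot,\cdot)$ on $V=H^1(\R^3)^3$ relative to $(\cdot,\cdot)_\rho$. By \eqref{eq:form-Fourier} and the bound $a(\bfv,\bfv)\ge c_{\mathrm{el}}\|\nabla\bfv\|^2_{L^2}$, the form $a(\bfv,\bfw)+(\bfv,\bfw)_\rho$ is equivalent to the $H^1$ inner product. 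Kato's representation theorem then gives a nonnegative self-adjoint operator $B$ on $L^2_\rho$ with form domain $D(B^{1/2})=H^1(\R^3)^3$ and $\|B^{1/2}\bfv\|_\rho^2=a(\bfv,\bfv)$.

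Next I would set up the first-order system. Write $\bm{U}=(\bfu,\pa_t\bfu)$ on the energy space $\mathcal X=H^1\times L^2_\rho$, normed by $\|(\bfv,\bfw)\|^2_{\mathcal X}=a(\bfv,\bfv)+\|\bfv\|^2_\rho+\|\bfw\|^2_\rho$. The generator is $\mathbb A=\begin{pmatrix}0&I\\-B&0\end{pmatrix}$ with domain $D(B)\times H^1$. Writing $\mathbb A=\mathbb A_0+\mathbb K$, where $\mathbb A_0$ is skew-adjoint for the inner product built from $a+\rho$ and $\mathbb K=\begin{pmatrix}0&0\\ I&0\end{pmatrix}$ is bounded, Stone's theorem and bounded perturbation give a $C_0$ group. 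Alternatively I could use the functional calculus directly, $\bfu(t)=\cos(tB^{1/2})\bff+B^{-1/2}\sin(tB^{1/2})\bfg$, with $t\,\mathrm{sinc}$ interpreted spectrally so that no invertibility of $B$ is needed. Either way we get existence, uniqueness and the stated continuity for the mild or weak solution, and that solution satisfies \eqref{eq:main-time} in the distributional sense because $B$ is the form operator.

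For energy conservation, I would use the functional calculus: $\|B^{1/2}\bfu(t)\|^2_\rho+\|\pa_t\bfu(t)\|^2_\rho$ is constant, since $(B^{1/2}\bfu,\pa_t\bfu)$ evolves by a unitary rotation on the spectral side. This quantity equals $2E_{\bfu}(t)$, because $a(\bfv,\bfv)=\int(\lambda|\nabla\cdot\bfv|^2+2\mu|\eps(\bfv)|^2)$. Nonnegativity of $E_{\bfu}$ comes from \eqref{eq:form-Fourier}, which is relevant because $\lambda$ may be negative. This gives a uniform-in-$t$ bound on $\|\nabla\bfu(t)\|_{L^2}$ and $\|\pa_t\bfu(t)\|_{L^2}$ in terms of $E(0)\lesssim\|\bff\|^2_{H^1}+\|\bfg\|^2_{L^2}$. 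For the $L^2$ part, $\bfu(t)=\bff+\int_0^t\pa_t\bfu$ gives $\|\bfu(t)\|_{L^2}\le\|\bff\|_{L^2}+t\,\rho_-^{-1/2}\sqrt{2E(0)}$, which is \eqref{eq:H1-growth}. Linear growth makes $\int_0^\infty e^{-st}\bfu(t)\,dt$ absolutely convergent as a Bochner integral in $H^1$ for each $s>0$.

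The main obstacle is technical rather than conceptual: the operator $B$ is not invertible on $\R^3$, since the spectrum reaches $0$, so I will avoid $B^{-1}$ throughout. The other point to check is the rigorous identification of the semigroup solution with a weak solution of the variable-coefficient equation. It is also necessary to prove uniqueness in the stated class, and for this I would use the standard Lions--Magenes weak-solution energy argument, with time regularization to justify testing against $\pa_t\bfu$.
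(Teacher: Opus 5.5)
Your proposal is correct and is essentially the paper's argument. Both use the form operator $\rho^{-1}\cA_{\lambda,\mu}$ on $L^2(\R^3,\rho\,dx)^3$ with form domain $H^1(\R^3)^3$ and the spectral formula with the paper's $m_t$ (equal to $t$ at $\varsigma=0$, so no inverse is needed), together with conservation of $\|\pa_t\bfu\|_{H_\rho}^2+a(\bfu,\bfu)=2E_{\bfu}$, nonnegativity from \eqref{eq:form-Fourier}, and linear $L^2$ growth from $\bfu(t)=\bff+\int_0^t\pa_\upsilon\bfu\,d\upsilon$; your Stone-theorem alternative and explicit Lions--Magenes uniqueness argument are harmless extras that spell out what the paper leaves to ``the spectral theorem gives the asserted energy solution.''
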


\begin{proof}
Let $H_\rho=L^2(\R^3,\rho\,dx)^3$ and $V=H^1(\R^3)^3$.  By
\eqref{eq:form-Fourier}, the form $a$ is Hermitian, nonnegative, and closed on $V$ after
addition of the $H_\rho$ norm.  Let $\mathbb A_\rho$ be the associated nonnegative
self-adjoint operator in $H_\rho$.  Weakly,
$\mathbb A_\rho=\rho^{-1}\cA_{\lambda,\mu}$ and
$D(\mathbb A_\rho^{1/2})=V$.  Define
\[
 m_t(\varsigma)=
 \begin{cases}
 \dfrac{\sin(t\sqrt\varsigma)}{\sqrt\varsigma},&\varsigma>0,\\[1mm]
 t,&\varsigma=0.
 \end{cases}
\]
The spectral theorem gives
\[
 \bfu(t)=\cos(t\mathbb A_\rho^{1/2})\bff+m_t(\mathbb A_\rho)\bfg
\]
and the asserted energy solution.  The conserved quantity is
$\|\pa_t\bfu(t)\|_{H_\rho}^2+a(\bfu(t),\bfu(t))=2E_{\bfu}(t)$; nonnegativity follows from
\eqref{eq:form-Fourier}.  Energy conservation controls
$\|\pa_t\bfu(t)\|_{L^2}$ and $\|\nabla\bfu(t)\|_{L^2}$.  Since
\[
 \bfu(t)=\bff+\int_0^t\pa_\upsilon\bfu(\upsilon)\,d\upsilon,
\]
the $L^2$ norm grows at most linearly, which proves \eqref{eq:H1-growth}.  The Laplace
integral converges because $e^{-st}(1+t)$ is integrable.
\end{proof}

\begin{remark}
The nonnegativity of \eqref{eq:elastic-energy} under
$\mu>0$ and $\lambda+2\mu>0$ is a property of the integrated whole-space quadratic form,
as seen from \eqref{eq:form-Fourier}.  We do not use pointwise positive definiteness of the
elastic energy density, which would require $3\lambda+2\mu>0$.
\end{remark}

Fix constants $0<\rho_-<\rho_0<\rho_+$.  Throughout the inverse problem, for
$j=1,2$, we assume
\begin{equation}
\label{eq:basic-admissibility}
 \rho_j\in L^\infty(\R^3;\R),
 \quad
 \rho_-\le\rho_j\le\rho_+,
 \quad
 \rho_j=\rho_0\ \text{in }\R^3\setminus Q,
\end{equation}
and
\begin{equation}
\label{eq:basic-initial-regularity}
 \bff_j\in H^1_c(Q;\R^3),
 \qquad
 \bfg_j\in L^2_c(Q;\R^3).
\end{equation}
For an open set $O\subset\R^d$, the notation $X_c(O)$ denotes the elements of
$X(\R^d)$ whose essential support is compactly contained in $O$.

We shall use the space of globally defined static Lam\'e fields
\begin{equation*}
 \cH_{\lambda,\mu}
 :=\{\bfv\in H^1_{\mathrm{loc}}(\R^3;\C^3):
       \cL_{\lambda,\mu}\bfv=0\text{ in }\R^3\}.
\end{equation*}

\subsection{Identification of the weighted initial state}

Assume from now on that
\begin{equation}
\label{eq:cylinder}
 Q=D\times I,
 \qquad
 \overline{D\times I}\subset\Om,
\end{equation}
where $D\subset\R^2$ is bounded and open and $I=(t_-,t_+)$.  For
$\psi\in C_c^\infty(I)$, define the bilateral Laplace transform
\begin{equation*}
 \mathscr L_\psi(z):=\int_I\psi(t)e^{zt}\,dt,
 \qquad z\in\C.
\end{equation*}
If $\psi\not\equiv0$, then $\mathscr L_\psi$ is a nontrivial entire function.  Hence
\begin{equation*}
 \mathcal S_\psi
 :=\{\tau>0:\mathscr L_\psi(\tau)\mathscr L_\psi(-\tau)\ne0\}
\end{equation*}
is open and dense in $(0,\infty)$.

Fix known nonzero profiles $\psi_f,\psi_g\in C_c^\infty(I;\R)$.  We assume
\begin{equation}
\label{eq:weighted-structure}
 \rho_j\bff_j=\bfF_j^{\flat}(x')\psi_f(x_3),
 \qquad
 \rho_j\bfg_j=\bfG_j^{\flat}(x')\psi_g(x_3),
\end{equation}
with $\bfF_j^{\flat},\bfG_j^{\flat}\in L^2_c(D;\R^3)$.

\begin{theorem}
\label{thm:weighted}
Assume \eqref{eq:basic-admissibility}, \eqref{eq:basic-initial-regularity},
\eqref{eq:cylinder}, and \eqref{eq:weighted-structure}.  If
\begin{equation}
\label{eq:data-equality}
 \mathscr M_{\rho_1,\bff_1,\bfg_1}
 =\mathscr M_{\rho_2,\bff_2,\bfg_2},
\end{equation}
then
\[
 \rho_1\bff_1=\rho_2\bff_2,
 \qquad
 \rho_1\bfg_1=\rho_2\bfg_2
 \quad\text{a.e. in }\R^3.
\]
\end{theorem}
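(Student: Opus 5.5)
Following the introduction, the proof runs through the low-frequency Laplace expansion. For $s>0$ the Laplace field satisfies
\[
(\cA_{\lambda,\mu}+\rho_0s^2)\widetilde{\bfu}_j=\bfG_j+s\bfF_j-s^2q_j\widetilde{\bfu}_j,
\qquad \bfG_j=\rho_j\bfg_j,\quad \bfF_j=\rho_j\bff_j .
\]
Equal traces on $\pa\Om\times(0,\infty)$ give equal traces of $\widetilde{\bfu}_j(\cdot,s)$ on $\pa\Om$ for every $s>0$. In $\R^3\setminus\overline\Om$ both fields solve the same constant-coefficient system $(\cA_{\lambda,\mu}+\rho_0s^2)\bfw=0$ and lie in $H^1$. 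Exterior uniqueness for this coercive problem (the $s^2$ term makes the exterior Dirichlet problem uniquely solvable in $H^1$) then gives $\widetilde{\bfu}_1=\widetilde{\bfu}_2$ outside $\Om$.

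Next, write $\widetilde{\bfu}_j=\bfGamma_s*(\bfG_j+s\bfF_j-s^2q_j\widetilde{\bfu}_j)$ with the modified Kupradze tensor $\bfGamma_s$. Its expansion is $\bfGamma_s(x)=\bfGamma_0(x)-\gamma_1 s I_3+O(s^2|x|)$. Using the uniform bound $\|\widetilde{\bfu}_j\|_{L^\infty(\text{bounded})}=O(1)$ coming from this expansion, equating the coefficients of $s^0$ and $s^1$ outside $\Om$ gives
\[
\bfGamma_0*(\bfG_1-\bfG_2)=0,\qquad
\bfGamma_0*(\bfF_1-\bfF_2)-\gamma_1\int(\bfG_1-\bfG_2)=0
\quad\text{in }\R^3\setminus\overline\Om .
\]
The $s^1$ identity involves the constant vector $\int(\bfG_1-\bfG_2)$. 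Since $\bfGamma_0*\bfh$ decays at infinity, this constant must vanish, so $\bfGamma_0*(\bfF_1-\bfF_2)=0$ outside $\Om$ as well.

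Then convert each identity into orthogonality. If $\bfh\in L^2_c(Q)$ and $\bfw=\bfGamma_0*\bfh$ vanishes outside $\Om$, then by unique continuation of the static Lam\'e system it vanishes outside $\overline Q$. Green's formula with $\cA_{\lambda,\mu}\bfw=\bfh$ then gives
\[
\int_Q \bfh\cdot\bfv\,dx=0\qquad\text{for every }\bfv\in\cH_{\lambda,\mu}.
\]
This holds for $\bfh=\bfG_1-\bfG_2=\bfC^\flat(x')\psi_g(x_3)$, with $\bfC^\flat\in L^2_c(D)$, and likewise for the $\bfF$ difference. It therefore suffices to show that the tensor-structured field $\bfh=\bfC^\flat\psi$ orthogonal to $\cH_{\lambda,\mu}$ is zero.

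For this, fix $\eta\in\R^2\setminus\{0\}$ and use exponential static fields $\bfv=\bfa e^{\zeta\cdot x}$ with $\zeta=(i\eta_1,i\eta_2,\pm|\eta|)$. Then $\zeta\cdot\zeta=0$, and $\cL\bfv=(\lambda+\mu)\zeta(\zeta\cdot\bfa)e^{\zeta\cdot x}$, so the admissible polarizations are those with $\zeta\cdot\bfa=0$. Inserting such $\bfv$ gives
\[
\bfa\cdot\widehat{\bfC^\flat}(-\eta)\,\mathscr L_\psi(\pm|\eta|)=0 .
\]
For $|\eta|\in\mathcal S_\psi$ both transform factors are nonzero. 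Hence $\widehat{\bfC^\flat}(-\eta)$ is orthogonal, in the bilinear sense, to $\zeta_+^\perp$ and to $\zeta_-^\perp$, so it is parallel to both $\zeta_+$ and $\zeta_-$. These two vectors are linearly independent because their third components differ while their first two agree and are nonzero. Therefore $\widehat{\bfC^\flat}(-\eta)=0$ for $|\eta|\in\mathcal S_\psi$, which is a dense set. Since $\bfC^\flat$ has compact support, $\widehat{\bfC^\flat}$ is continuous (indeed entire), so it vanishes identically and $\bfC^\flat=0$. This proves the theorem.

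The main obstacle is the rigorous low-frequency expansion. One needs the modified Kupradze expansion with an $O(s^2)$ remainder that is uniform on bounded sets, together with a uniform $O(1)$ bound for $\widetilde{\bfu}_j$ on $Q$, so that the $s^2q\widetilde{\bfu}$ term does not contaminate orders $0$ and $1$. I would obtain these from the resolvent estimates deferred to the appendices. The remaining steps are the polarization-line algebra above.
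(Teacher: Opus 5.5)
Your proposal is correct and follows essentially the same route as the paper: exterior equality of the Laplace fields, the low-frequency Lippmann--Schwinger expansion through order $s$, conversion of vanishing exterior Kelvin potentials into orthogonality against $\cH_{\lambda,\mu}$, and the two opposite null phases $\zeta_\pm$ whose polarization lines intersect trivially. There are two small deviations, neither of which matters. First, you eliminate the constant $\gamma_1\int_Q(\bfG_1-\bfG_2)\,dy$ by the decay of $\bfGamma_0*(\bfF_1-\bfF_2)$ at infinity, whereas the paper first proves $\bfG_1=\bfG_2$. Second, your unique-continuation step is valid, since $\R^3\setminus\overline Q$ is connected for a cylinder, but it is unnecessary: $\bfGamma_0*\bfh$ already vanishes outside $\overline\Om$, and the paper applies the Green--Betti identity on $\Om$ directly.
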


For any pair satisfying the assumptions of Theorem~\ref{thm:weighted} and
\eqref{eq:data-equality}, we write the common weighted states as
\begin{equation*}
 \bfF:=\rho_1\bff_1=\rho_2\bff_2,
 \qquad
 \bfG:=\rho_1\bfg_1=\rho_2\bfg_2.
\end{equation*}
Let $\bfGamma_0$ denote the Kelvin tensor and set
\begin{equation}
\label{eq:common-U01}
 \bfU_0:=\bfGamma_0*\bfG,
 \qquad
 \bfU_1:=\bfGamma_0*\bfF-\gamma_1\int_Q\bfG(y)\,dy,
\end{equation}
where
\begin{equation}
\label{eq:gamma1}
 \gamma_1
 =\frac{\sqrt{\rho_0}}{12\pi}
 \left[2\mu^{-3/2}+(\lambda+2\mu)^{-3/2}\right]>0.
\end{equation}

Propositions~\ref{prop:s2-density} and \ref{prop:s3-density} prove the common reduction
used by all density results below: with $\delta\rho:=\rho_1-\rho_2$,
\begin{equation*}
 \int_Q\delta\rho(x)\bfU_m(x)\cdot\bfv(x)\,dx=0,
 \qquad m=0,1,
 \qquad \bfv\in\cH_{\lambda,\mu}.
\end{equation*}

\subsection{Finite-dimensional density families}

Let $\phi_1,\ldots,\phi_M\in L^\infty_c(Q;\R)$ be linearly independent, let
$\mathfrak P_M\subset\R^M$ be a prescribed nonempty parameter set, and put
\[
 \rho_\alpha:=\rho_0+\sum_{m=1}^M\alpha_m\phi_m,
 \qquad
 \cQ_M
 :=\left\{
 \rho_\alpha:
 \alpha\in\mathfrak P_M,
 \quad \rho_-\le\rho_\alpha\le\rho_+\ \text{a.e. in }\R^3
 \right\}.
\]
For static probes $\bfv_1,\ldots,\bfv_L\in\cH_{\lambda,\mu}$ and
$1\le\ell\le L$, $1\le m\le M$, set
\[
 \mathbb M^{(0)}_{\ell m}
 :=\int_Q\phi_m(x)\bfU_0(x)\cdot\bfv_\ell(x)\,dx,
 \qquad
 \mathbb M^{(1)}_{\ell m}
 :=\int_Q\phi_m(x)\bfU_1(x)\cdot\bfv_\ell(x)\,dx.
\]
Form the realified matrix
\begin{equation*}
 \mathbb M_{\R}
 :=\begin{pmatrix}
 \Re\mathbb M^{(0)}\\
 \Im\mathbb M^{(0)}\\
 \Re\mathbb M^{(1)}\\
 \Im\mathbb M^{(1)}
 \end{pmatrix}
 \in\R^{4L\times M}.
\end{equation*}

\begin{theorem}
\label{thm:finite-rank}
Assume the hypotheses of Theorem~\ref{thm:weighted}, the data equality
\eqref{eq:data-equality}, and $\rho_1,\rho_2\in\cQ_M$.  Form
$\bfU_0,\bfU_1$ and $\mathbb M_{\R}$ from the common weighted states identified by that
theorem.  If
\begin{equation}
\label{eq:real-full-rank}
 \operatorname{rank}_{\R}\mathbb M_{\R}=M,
\end{equation}
then
\[
 \rho_1=\rho_2,
 \qquad
 \bff_1=\bff_2,
 \qquad
 \bfg_1=\bfg_2
 \quad\text{a.e. in }\R^3.
\]
\end{theorem}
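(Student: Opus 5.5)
The proof is a direct linear-algebra consequence of Theorem~\ref{thm:weighted} and the two density identities of Propositions~\ref{prop:s2-density} and \ref{prop:s3-density}. First, Theorem~\ref{thm:weighted} gives $\rho_1\bff_1=\rho_2\bff_2=\bfF$ and $\rho_1\bfg_1=\rho_2\bfg_2=\bfG$. Consequently $\bfU_0,\bfU_1$ in \eqref{eq:common-U01} are common to both configurations, and so is $\mathbb M_{\R}$. The propositions then yield, with $\delta\rho=\rho_1-\rho_2$,
\[
 \int_Q\delta\rho\,\bfU_m\cdot\bfv\,dx=0,\qquad m=0,1,\ \bfv\in\cH_{\lambda,\mu}.
\]

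Next I write $\rho_j=\rho_{\alpha^{(j)}}$ with $\alpha^{(j)}\in\mathfrak P_M$. Setting $c:=\alpha^{(1)}-\alpha^{(2)}\in\R^M$ gives $\delta\rho=\sum_m c_m\phi_m$. I apply the identities with $\bfv=\bfv_\ell$ for $\ell=1,\dots,L$, which gives $\mathbb M^{(0)}c=0$ and $\mathbb M^{(1)}c=0$ in $\C^L$. Since $c$ is real, separating real and imaginary parts gives $\Re\mathbb M^{(k)}c=\Im\mathbb M^{(k)}c=0$, that is, $\mathbb M_{\R}c=0$. The rank condition \eqref{eq:real-full-rank} makes $\mathbb M_{\R}$ injective on $\R^M$, so $c=0$. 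Hence $\rho_1=\rho_2$ a.e.; linear independence of the $\phi_m$ is not even needed for this step.

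Finally, $\rho_1=\rho_2\ge\rho_->0$, so dividing the weighted identities by the common density gives $\bff_1=\bff_2$ and $\bfg_1=\bfg_2$ a.e. in $\R^3$.

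There is no real obstacle here. The only points needing care are that the static probes $\bfv_\ell$ may be complex valued, which is why the real and imaginary parts are separated using the reality of $c$, and that the integrals defining $\mathbb M^{(k)}$ are finite. The latter holds because $\phi_m\in L^\infty_c(Q)$, $\bfU_m\in L^\infty_{\mathrm{loc}}$ (a Kelvin potential of a compactly supported $L^2$ or $H^1$ density plus a constant), and $\bfv_\ell\in H^1_{\mathrm{loc}}$, in fact smooth by elliptic regularity.
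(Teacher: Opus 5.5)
Your proposal is correct and follows essentially the same route as the paper: test \eqref{eq:I0} and \eqref{eq:I1} with $\bfv_1,\ldots,\bfv_L$ to get $\mathbb M^{(0)}\delta\alpha=\mathbb M^{(1)}\delta\alpha=\bm{0}$, use that $\delta\alpha$ is real to rewrite this as $\mathbb M_{\R}\delta\alpha=\bm{0}$, and invoke the rank condition. Equality of $\bff_j,\bfg_j$ then follows from Theorem~\ref{thm:weighted} and positivity of the common density. Your extra remarks (finiteness of the matrix entries, and that linear independence of the $\phi_m$ is implied by the rank condition anyway) are accurate.
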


The matrix depends on the weighted state identified from the data and is therefore an
\emph{a posteriori} certificate on that source fiber.  If the full family of static probes
separates the $M$-dimensional density space, elementary finite-dimensional duality selects
finitely many probes satisfying \eqref{eq:real-full-rank}.  The smallest singular value of
$\mathbb M_{\R}$ controls only the resulting coefficient-level linear system; no stability
claim from noisy time-domain data is made.

\subsection{Aligned multi-profile density classes}

Let $1\le N\le4$ and let
$\Psi=(\psi_1,\ldots,\psi_N)$ be fixed real-valued profiles in
$C_c^\infty(I)$.  For $N\le2$, define the two-sided value matrix
\begin{equation*}
 \mathscr V_{\Psi}(z)
 :=
 \begin{pmatrix}
  \mathscr L_{\psi_1}(z)&\cdots&\mathscr L_{\psi_N}(z)\\
  \mathscr L_{\psi_1}(-z)&\cdots&\mathscr L_{\psi_N}(-z)
 \end{pmatrix}.
\end{equation*}
We call $\Psi$ \emph{two-sided Laplace nondegenerate} if some $N\times N$ minor of
$\mathscr V_\Psi$ is not identically zero.  For $N\le4$, define the two-sided first-order
Hermite--Laplace matrix
\begin{equation*}
 \mathscr H_{\Psi}(z)
 :=
 \begin{pmatrix}
  \mathscr L_{\psi_1}(z)&\cdots&\mathscr L_{\psi_N}(z)\\
  \mathscr L'_{\psi_1}(z)&\cdots&\mathscr L'_{\psi_N}(z)\\
  \mathscr L_{\psi_1}(-z)&\cdots&\mathscr L_{\psi_N}(-z)\\
  \mathscr L'_{\psi_1}(-z)&\cdots&\mathscr L'_{\psi_N}(-z)
 \end{pmatrix}.
\end{equation*}
Here
\[
 \mathscr L'_{\psi}(z)=\int_I t\psi(t)e^{zt}\,dt,
\]
and $\mathscr L'_{\psi}(-z)$ denotes the derivative evaluated at $-z$.  We call $\Psi$
\emph{Hermite--Laplace nondegenerate} if some $N\times N$ minor of
$\mathscr H_\Psi$ is not identically zero.  Equivalently, the corresponding matrix has full
column rank at some point of $\C$.  Since its minors are entire, either
nondegeneracy condition implies full column rank on an open dense subset of $(0,\infty)$.
We write these full-rank sets as $\mathcal S_\Psi^{(0)}$ and
$\mathcal S_\Psi^{(1)}$, respectively.

If $J$ is a nonempty compact subset of $\mathcal S_\Psi^{(1)}$, continuity gives
\begin{equation}
\label{eq:Hermite-uniform-gap}
 \inf_{\tau\in J}\sigma_{\min}(\mathscr H_\Psi(\tau))>0.
\end{equation}
This controls only the algebraic profile matrix after the static equations have been
normalized.  Uniform conditioning of the full probing system also requires bounded
normalizations of the pure-mode amplitudes and lower bounds for the derivative-row
polarization factors.  More precisely, suppose $0<\tau_0<\tau_1$ and
$[\tau_0,\tau_1]\subset J$.  Fix
$\bfM=(\bfM',M_3)\in\R^3\setminus\{\bm{0}\}$ and
$\eta\in\R^2\setminus\{\bm{0}\}$, and set
$\tau=|\eta|$ and $\zeta_\pm=(i\eta_1,i\eta_2,\pm\tau)$.  On
$\tau\in[\tau_0,\tau_1]$ one has
$|\bfM\cdot\zeta_\pm|\ge |M_3|\tau_0$ when $M_3\ne0$; if $M_3=0$, the angular variable
must remain a positive distance from the line $\bfM'\cdot\eta=0$.  No stability estimate
from noisy time-domain data is asserted.

Let $\mathfrak A_0$ denote the class of individual triples $(\rho,\bff,\bfg)$ satisfying
\begin{equation*}
 \rho\in L^\infty(\R^3;\R),
 \quad \rho_-\le\rho\le\rho_+,
 \quad \rho=\rho_0\ \text{in }\R^3\setminus Q,
 \quad \bff\in H^1_c(Q;\R^3),
 \quad \bfg\in L^2_c(Q;\R^3).
\end{equation*}
Fix a known source profile $\psi_{\mathrm{src}}\in C_c^\infty(I;\R)$ with
$\int_I\psi_{\mathrm{src}}(t)\,dt\ne0$.  Fix density profiles
$\Psi_\rho=(\psi_{\rho,1},\ldots,\psi_{\rho,N})\in(C_c^\infty(I;\R))^N$; these profiles
remain fixed throughout.  Let $\mathfrak A^{(N)}$ consist of the triples in
$\mathfrak A_0$ for which
\begin{equation}
\label{eq:N-profile-density}
 \rho(x)=\rho_0+\sum_{k=1}^Np_k(x')\psi_{\rho,k}(x_3),
 \qquad
 p_k\in W^{1,\infty}_c(D;\R),
\end{equation}
and
\begin{equation}
\label{eq:aligned-weighted}
 \rho\bfg=\bfG^{\flat}(x')\psi_{\mathrm{src}}(x_3),
 \qquad
 \rho\bff=\beta\bfG^{\flat}(x')\psi_{\mathrm{src}}(x_3),
\end{equation}
for some $\bfG^{\flat}\in H^1_c(D;\R^3)$ and $\beta\in\R$, with
\begin{equation*}
 \int_Q\rho\bfg\,dx\ne\bm{0}.
\end{equation*}
The regularity in \eqref{eq:N-profile-density} and the positive lower bound on $\rho$
allow division by $\rho$.  Together with $\bfG^{\flat}\in H^1_c(D)$, this preserves the
required $H^1$ regularity of the aligned initial displacement.  Since $\rho>0$,
\eqref{eq:aligned-weighted} is equivalent to $\bff=\beta\bfg$.  The class is nonempty
and infinite dimensional.

\begin{theorem}
\label{thm:global}
Let $1\le N\le4$ and consider two triples in $\mathfrak A^{(N)}$ with the same fixed
source and density profiles.
Assume either
\begin{enumerate}
\item[(i)] $1\le N\le2$ and $\Psi_\rho$ is two-sided Laplace nondegenerate; or
\item[(ii)] $1\le N\le4$, $\lambda+\mu\ne0$, and $\Psi_\rho$ is
Hermite--Laplace nondegenerate.
\end{enumerate}
Then equality of the passive displacement histories implies
\[
 \rho_1=\rho_2,
 \qquad
 \bff_1=\bff_2,
 \qquad
 \bfg_1=\bfg_2
 \quad\text{a.e. in }\R^3.
\]
Equivalently, the passive map is injective on $\mathfrak A^{(N)}$ under either condition.
\end{theorem}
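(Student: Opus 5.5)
We would first apply Theorem~\ref{thm:weighted}, with $\psi_f=\psi_g=\psi_{\mathrm{src}}$ and the structure \eqref{eq:aligned-weighted}, to obtain the common weighted states $\bfF=\rho_j\bff_j$ and $\bfG=\rho_j\bfg_j$. From $\bfG$ we read off the common $\bfG^{\flat}$. The $\beta_j$ are then equal because $\beta_1\bfG^{\flat}=\beta_2\bfG^{\flat}$ with $\bfG^{\flat}\ne0$; the latter holds since $\int_Q\bfG\,dx\ne\bm 0$. Hence $\bfF=\beta\bfG$, and by \eqref{eq:common-U01}
\[
 \bfU_1-\beta\bfU_0=-\gamma_1\int_Q\bfG\,dy=:-\gamma_1\bfM,\qquad \bfM\in\R^3\setminus\{\bm0\}.
\]
Subtracting the two identities of Propositions~\ref{prop:s2-density}--\ref{prop:s3-density} (taken with $m=1$ and $m=0$) yields the reduced transform
\[
 \int_Q\delta\rho(x)\,\bfM\cdot\bfv(x)\,dx=0,\qquad \bfv\in\cH_{\lambda,\mu},
\]
with $\delta\rho=\sum_k\delta p_k(x')\psi_{\rho,k}(x_3)$ and $\delta p_k\in W^{1,\infty}_c(D)$. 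Once $\delta\rho=0$ is shown, the common $\bfF,\bfG$ give $\bff_1=\bff_2$ and $\bfg_1=\bfg_2$ after division by $\rho$.

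Next we test with pure normal modes. For $\eta\in\R^2\setminus\{0\}$, $\tau=|\eta|$ and $\zeta_\pm=(i\eta_1,i\eta_2,\pm\tau)$, the field $\bfv=\bfa e^{\zeta_\pm\cdot x}$ is static Lamé whenever $\zeta_\pm\cdot\bfa=0$, since $\zeta_\pm\cdot\zeta_\pm=0$. The kernel of the rank-one matrix $\zeta_\pm\zeta_\pm^\top$ is two-dimensional. It contains an $\bfa$ with $\bfM\cdot\bfa\ne0$ unless $\bfM\parallel\overline{\zeta_\pm}$, which cannot happen for both signs when $\bfM$ is real; in fact, for real $\bfM$ and $\eta$ off a null set this never happens. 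Plugging these fields in gives
\[
 \sum_k\widehat{\delta p_k}(\eta)\,\mathscr L_{\psi_{\rho,k}}(\pm|\eta|)=0
\]
for $\eta$ in an open set. In case (i), nondegeneracy gives an $N\times N$ minor of $\mathscr V_\Psi(\tau)$ that is nonzero for $\tau$ in a dense open set. Hence $\widehat{\delta p_k}(\eta)=0$ on an open set, and analyticity of the Fourier transforms of compactly supported functions gives $\delta p_k=0$.

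For case (ii) we need two more rows, $\mathscr L'_{\psi}(\pm\tau)$. Here we use the Jordan chain of $\mathscr P_\eta(r)$ at $r=\pm\tau$. Take an eigenvector $\bfa_0$ with $\mathscr P_\eta(\pm\tau)\bfa_0=0$ and a generalized vector $\bfa_1$ with $\mathscr P_\eta(\pm\tau)\bfa_1+\mathscr P_\eta'(\pm\tau)\bfa_0=0$. Then
\[
 \bfv=e^{i\eta\cdot x'+rx_3}\bigl(x_3\bfa_0+\bfa_1\bigr)\big|_{r=\pm\tau}
\]
is static Lamé. Existence of the chain when $\lambda+\mu\ne0$ is the assertion quoted in the introduction, and we would verify it in Section~\ref{sec:static} by direct computation. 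A natural choice is $\bfa_0=\zeta$, using $\mathscr P'=2\mu r I+(\lambda+\mu)(e_3\zeta^\top+\zeta e_3^\top)$. Testing with this $\bfv$ gives
\[
 \sum_k\widehat{\delta p_k}(\eta)\bigl[(\bfM\cdot\bfa_0)\mathscr L'_{\psi_k}(\pm\tau)+(\bfM\cdot\bfa_1)\mathscr L_{\psi_k}(\pm\tau)\bigr]=0.
\]
Subtracting a multiple of the value row already obtained isolates the derivative row, provided $\bfM\cdot\bfa_0\ne0$. This nondegeneracy is the main obstacle. We will need $\bfa_0$ to be the specific chain-initiating eigenvector, presumably along $\zeta$, and $\bfM\cdot\zeta_\pm\ne0$ holds for $\eta$ outside the line $\bfM'\cdot\eta=-M_3(\pm\tau)$, which we exclude by restricting to an open cone of $\eta$. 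We then have all four rows of $\mathscr H_\Psi(\tau)$ on an open set of $\eta$, with $|\eta|\in\mathcal S^{(1)}_\Psi$. Full column rank forces $\widehat{\delta p_k}=0$ there, and analytic continuation concludes as in case (i).
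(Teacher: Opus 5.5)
Your proposal is correct and follows the paper's proof essentially step for step: common weighted states via Theorem~\ref{thm:weighted}, $\beta_1=\beta_2$, reduction to the constant-vector transform, pure modes at $\pm|\eta|$, and Jordan-chain modes with $\bfa_0=\zeta_\pm$, $\bfa_1=-\mathfrak c_{\lambda,\mu}e_3$ and polarization factor $\bfM\cdot\zeta_\pm$. The only variation is that you use real-analyticity of $\widehat{\delta p_k}$ on an open set where the paper uses density plus continuity, and your two small slips are harmless: the annihilator condition is $\bfM\in\operatorname{span}\{\zeta_\pm\}$, which never holds for $\eta\ne\bm{0}$ because $\bfM\cdot\bfM=|\bfM|^2\ne0=\zeta_\pm\cdot\zeta_\pm$, and $\bfM\cdot\zeta_\pm=i\bfM'\cdot\eta\pm M_3\tau$ vanishes only when $M_3=0$ and $\bfM'\cdot\eta=0$.
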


Proposition~\ref{prop:translated-Hermite} supplies an explicit nondegenerate family.

\begin{proposition}
\label{prop:translated-Hermite}
Let $\varphi\in C_c^\infty(\R;\R)$ be nonzero and choose
$d_1<d_2<d_3<d_4$ so that
$\operatorname{supp}\varphi+d_k\Subset I$.  Set
\[
 \psi_k(t):=\varphi(t-d_k),
 \qquad k=1,2,3,4.
\]
Then the four-profile family is Hermite--Laplace nondegenerate.  Every one- or two-profile
subfamily is two-sided Laplace nondegenerate, and every subfamily of at most four profiles is
Hermite--Laplace nondegenerate.
\end{proposition}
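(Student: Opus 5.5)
Translation gives $\mathscr L_{\psi_k}(z)=e^{d_kz}\Phi(z)$ with $\Phi:=\mathscr L_\varphi$, and differentiating,
\[
 \mathscr L'_{\psi_k}(z)=e^{d_kz}\bigl(d_k\Phi(z)+\Phi'(z)\bigr).
\]
Both $\Phi$ and $\Phi'$ are entire, and $\Phi\not\equiv0$ because $\varphi\ne0$. The plan is to factor $\Phi(\pm z)$ and the exponentials out of $\mathscr H_\Psi(z)$, reduce to a determinant built from $e^{\pm d_kz}$ and $d_ke^{\pm d_kz}$, and show that this determinant is not identically zero. Nondegeneracy then holds because a product of entire functions that are not identically zero is not identically zero.

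For the four-profile family, first subtract $\Phi'(z)/\Phi(z)$ times row one from row two, and $\Phi'(-z)/\Phi(-z)$ times row three from row four. This is valid on the open set where $\Phi(z)\Phi(-z)\ne0$; alternatively, one can use multilinearity in rows directly. The result is
\[
 \det\mathscr H_\Psi(z)=\Phi(z)^2\Phi(-z)^2\,\Delta(z),
\]
where $\Delta(z)$ is the determinant with rows $e^{d_kz}$, $d_ke^{d_kz}$, $e^{-d_kz}$, $d_ke^{-d_kz}$. The row operation only adds multiples of the value rows to the derivative rows, so this identity holds for all $z$ without division.

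The main step is to show $\Delta\not\equiv0$, and I would do this by asymptotics as $z=\tau\to+\infty$. Expand $\Delta$ by the Laplace expansion along the first two rows. Each $2\times2$ minor from the first two rows with columns $\{j,k\}$ equals $(d_k-d_j)e^{(d_j+d_k)\tau}$, and the complementary minor from the last two rows equals $\pm(d_{k'}-d_{j'})e^{-(d_{j'}+d_{k'})\tau}$. The exponent of a term is therefore $(d_j+d_k)-(d_{j'}+d_{k'})$. This is maximized uniquely by $\{j,k\}=\{3,4\}$ and $\{j',k'\}=\{1,2\}$, because the $d_k$ are strictly increasing. The dominant term is
\[
 \pm(d_4-d_3)(d_2-d_1)\,e^{(d_3+d_4-d_1-d_2)\tau}\neq0,
\]
and all other terms have strictly smaller exponents. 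Hence $\Delta(\tau)\ne0$ for large $\tau$, and $\det\mathscr H_\Psi$ is a nonzero entire function.

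For subfamilies with $N\le3$ profiles, choose $N$ rows of $\mathscr H_\Psi$ and argue the same way. For two profiles $d_j<d_k$, the value rows give the minor $\Phi(z)\Phi(-z)\bigl(e^{(d_j-d_k)z}-e^{(d_k-d_j)z}\bigr)$, which is nonzero for $z=\tau>0$. This gives two-sided Laplace nondegeneracy, and hence also Hermite--Laplace nondegeneracy, since the $\mathscr V$ rows are among the $\mathscr H$ rows. For one profile, the single entry $\Phi(z)e^{d_1z}$ suffices.

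For three profiles $d_1<d_2<d_3$ (relabeled), use the rows $z$-value, $z$-derivative and $(-z)$-value. After the same row reduction, the minor is $\Phi(z)^2\Phi(-z)$ times the determinant with rows $e^{d_kz}$, $d_ke^{d_kz}$, $e^{-d_kz}$. Expand along the third row: the dominant term comes from the column with the smallest $d$, namely $e^{-d_1\tau}(d_3-d_2)e^{(d_2+d_3)\tau}$. Its exponent is strictly larger than that of the other two terms, so this minor is also not identically zero.

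The only delicate point is keeping the row operations valid as identities between entire functions rather than dividing by $\Phi$. This is handled by noting that $\mathscr L'_{\psi_k}$ is a combination of the columns' value entries with coefficients independent of $k$, apart from the factor $d_k$.
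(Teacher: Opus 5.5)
Your proof is correct and follows the paper's argument: you use the same factorization $\det\mathscr H_\Psi(z)=\Phi(z)^2\Phi(-z)^2\Delta(z)$, obtained by multilinearity in the derivative rows, and the same Laplace expansion, in which the term with upper columns $\{3,4\}$ and lower columns $\{1,2\}$, namely $\pm(d_4-d_3)(d_2-d_1)e^{(d_3+d_4-d_1-d_2)\tau}$, uniquely dominates as $\tau\to+\infty$. The only difference is in the subfamily claims: you compute explicit $N\times N$ minors for each size (including a $3\times3$ one), whereas the paper notes that wherever the full $4\times4$ matrix is invertible, every subset of its columns is linearly independent, which settles all the Hermite--Laplace subfamily claims at once.
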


Thus, in Theorem~\ref{thm:global}, the choice
$\psi_{\rho,k}=\psi_k$ gives an explicit nondegenerate density-profile family.

The alignment is a structural assumption.  We do not claim that it is generic.  The
next result identifies a rigidity of the constant-vector reduction used in the proof.

\begin{proposition}
\label{prop:alignment-rigidity}
Let $\bfF,\bfG\in L^2_c(Q;\R^3)$ and define $\bfU_0,\bfU_1$ by
\eqref{eq:common-U01}.  Suppose that for some $c\in\R$, some
$\bfM\in\R^3\setminus\{\bm{0}\}$, and some scalar distribution $\chi\in\mathcal D'(I)$,
\[
 \bfU_1(x)-c\bfU_0(x)=\bfM\chi(x_3)
 \qquad\text{in }\mathcal D'(Q)^3.
\]
Then
\begin{equation}
\label{eq:rigidity-alignment}
 \bfF=c\bfG\quad\text{a.e. in }Q,
\end{equation}
and
\begin{equation}
\label{eq:rigidity-constant}
 \bfU_1-c\bfU_0=-\gamma_1\int_Q\bfG(y)\,dy
 \qquad\text{in }Q.
\end{equation}
Thus the one-directional factorization on the whole cylinder reduces to the aligned
constant-vector case.  If $\bfM\chi\not\equiv\bm{0}$ in $Q$, then
$\int_Q\bfG(y)\,dy\ne\bm{0}$.
\end{proposition}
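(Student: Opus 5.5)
The plan is to apply the Lam\'e operator to the hypothesis and use that the Kelvin tensor is the fundamental solution. Set $\bfH:=\bfF-c\bfG\in L^2_c(Q;\R^3)$. Since the constant vector $\gamma_1\int_Q\bfG$ is annihilated by $\cL_{\lambda,\mu}$, and $\cA_{\lambda,\mu}(\bfGamma_0*\bfh)=\bfh$, we get
\[
 \bfW:=\bfU_1-c\bfU_0=\bfGamma_0*\bfH-\gamma_1\int_Q\bfG\,dy,
 \qquad
 \cA_{\lambda,\mu}\bfW=\bfH\quad\text{in }\mathcal D'(\R^3)^3 .
\]
On $Q$ the hypothesis gives $\bfW=\bfM\chi(x_3)$. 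Hence on $Q$ we have $\bfH=-\cL_{\lambda,\mu}(\bfM\chi)=-\mu\bfM\chi''-(\lambda+\mu)M_3\,e_3\,\chi''$, which depends only on $x_3$. Because $\bfH\in L^2_c(Q)$, its essential support is compact in $Q=D\times I$. A function of $x_3$ alone vanishes near the lateral boundary $\pa D\times I$, so it must vanish identically. Therefore $\bfH=\bm0$ in $Q$, and hence a.e., which is \eqref{eq:rigidity-alignment}. The same step also gives $(\mu I_3+(\lambda+\mu)e_3e_3^\top)\bfM\,\chi''=0$. The matrix here is invertible, since its eigenvalues are $\mu$ and $\lambda+2\mu$, and $\bfM\ne\bm0$. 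So $\chi''=0$ on $I$, and $\chi$ is affine.

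With $\bfH=0$ we immediately get $\bfW=\bfGamma_0*\bm0-\gamma_1\int_Q\bfG=-\gamma_1\int_Q\bfG$ on all of $\R^3$, in particular in $Q$. This is \eqref{eq:rigidity-constant}. Notice that this route never needs the affine form of $\chi$; it reads the value of $\bfW$ directly from its definition. Consistency then forces $\bfM\chi$ to equal this constant vector, so $\chi$ is in fact constant. Finally, if $\bfM\chi\not\equiv\bm0$ in $Q$, then $-\gamma_1\int_Q\bfG=\bfM\chi\ne\bm0$, and since $\gamma_1>0$ by \eqref{eq:gamma1} we conclude $\int_Q\bfG\ne\bm0$.

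The main point needing care is the distributional bookkeeping. We must confirm that $\bfGamma_0*\bfH$ is a well-defined $H^1_{\mathrm{loc}}$ field with $\cA_{\lambda,\mu}(\bfGamma_0*\bfH)=\bfH$ for $L^2$ compactly supported $\bfH$; this is the standard Kelvin-tensor property developed in the static potential theory of Section~\ref{sec:static}. We must also justify restricting the identity $\cA_{\lambda,\mu}\bfW=\bfH$ to $\mathcal D'(Q)$ and differentiating $\bfM\chi(x_3)$ there, which is legitimate because $\chi\in\mathcal D'(I)$ pulls back to a distribution on $D\times I$. The support argument is the one place where the cylinder geometry $Q=D\times I$ with $D$ bounded is used: an $x_3$-only distribution on $D\times I$ vanishing on a nonempty open subset $(D\setminus K)\times I$ must vanish.
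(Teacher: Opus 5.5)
Your proof is correct and follows the paper's argument: you apply $\cA_{\lambda,\mu}$ on $Q$ to get $\bfH=-(\mu\bfM+(\lambda+\mu)M_3e_3)\chi''(x_3)$, use the compact support of $\bfH$ to find a product set $(D\setminus K')\times I$ on which this $x_3$-only distribution vanishes, conclude $\bfH=\bm{0}$, and then read off the constant $-\gamma_1\int_Q\bfG$ directly from the definition of $\bfU_1-c\bfU_0$. The only cosmetic difference is the order of steps: you obtain $\bfH=\bm{0}$ straight from the vanishing of the vector distribution and then use invertibility of $\operatorname{diag}(\mu,\mu,\lambda+2\mu)$ only for the extra fact $\chi''=0$, whereas the paper first derives $\chi''=0$ from the nonzero prefactor and then concludes $\bfH=\bm{0}$.
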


\subsection{A conditional monotonicity criterion}

\begin{corollary}
\label{cor:monotonicity}
Assume the hypotheses of Theorem~\ref{thm:weighted} and the data equality
\eqref{eq:data-equality}.  Let $\bfU_0$ be the common field in
\eqref{eq:common-U01}.  Suppose that there is a real field
$\bfv_*\in\cH_{\lambda,\mu}$ such that
\begin{equation}
\label{eq:positive-probe}
 \bfU_0(x)\cdot\bfv_*(x)>0
 \quad\text{for a.e. }x\in Q.
\end{equation}
If either $\rho_1\le\rho_2$ or $\rho_2\le\rho_1$ a.e. in $Q$, then
\[
 \rho_1=\rho_2,
 \qquad
 \bff_1=\bff_2,
 \qquad
 \bfg_1=\bfg_2
 \quad\text{a.e. in }\R^3.
\]
\end{corollary}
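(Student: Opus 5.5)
We combine Theorem~\ref{thm:weighted} with the first density identity from Proposition~\ref{prop:s2-density}, used with the single probe $\bfv=\bfv_*$. Theorem~\ref{thm:weighted} gives the common weighted states $\bfF=\rho_1\bff_1=\rho_2\bff_2$ and $\bfG=\rho_1\bfg_1=\rho_2\bfg_2$, hence a common field $\bfU_0=\bfGamma_0*\bfG$. With $\delta\rho=\rho_1-\rho_2$, the identity for $m=0$ and $\bfv=\bfv_*\in\cH_{\lambda,\mu}$ reads
\[
 \int_Q\delta\rho(x)\,\bfU_0(x)\cdot\bfv_*(x)\,dx=0 .
\]
Since $\bfv_*$ is real and $\bfF,\bfG$ are real, the integrand is real-valued. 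We should check that it is integrable: $\delta\rho\in L^\infty$, $\bfU_0\in H^1_{\mathrm{loc}}$ or better since $\bfG\in L^2_c$, and $\bfv_*\in H^1_{\mathrm{loc}}$ (in fact smooth by elliptic regularity for the constant-coefficient Lam\'e system). So the product lies in $L^1(Q)$ on the bounded set $Q$.

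Next, suppose without loss of generality that $\rho_1\le\rho_2$ a.e. in $Q$, so $\delta\rho\le0$. By \eqref{eq:positive-probe}, the integrand $\delta\rho\,\bfU_0\cdot\bfv_*$ is then a.e. nonpositive, and its integral vanishes. Hence $\delta\rho\,\bfU_0\cdot\bfv_*=0$ a.e. in $Q$. Because $\bfU_0\cdot\bfv_*>0$ a.e., this gives $\delta\rho=0$ a.e. in $Q$. Outside $Q$ both densities equal $\rho_0$ by \eqref{eq:basic-admissibility}, so $\rho_1=\rho_2$ a.e. in $\R^3$. The case $\rho_2\le\rho_1$ is symmetric.

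Finally, $\rho_1=\rho_2\ge\rho_->0$ allows division in $\rho_1\bff_1=\rho_2\bff_2$ and $\rho_1\bfg_1=\rho_2\bfg_2$, which yields $\bff_1=\bff_2$ and $\bfg_1=\bfg_2$ a.e.

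The only delicate point is making sure the $m=0$ identity is available for the given probe: $\bfv_*$ must belong to the full class $\cH_{\lambda,\mu}$ used in Proposition~\ref{prop:s2-density}, without growth restrictions. I would check that this proposition allows arbitrary globally defined static fields, as the statement in the setting section asserts. Everything else is an elementary sign argument.
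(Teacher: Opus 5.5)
Your proposal is correct and matches the paper's proof: test \eqref{eq:I0} with $\bfv_*$, use the one-signed factor $\delta\rho$ against the a.e.\ positive weight $\bfU_0\cdot\bfv_*$ to get $\rho_1=\rho_2$ in $Q$, extend to $\R^3$ by the background condition, and divide the identified weighted states by the common positive density. The point you flag is not a problem: Lemma~\ref{lem:potential-orthogonality} only integrates over the bounded set $\Om$, so it applies to every $\bfv\in\cH_{\lambda,\mu}$ with no growth restriction.
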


\section{Static Lam\'e potentials and completeness}
\label{sec:static}

\subsection{Complex static exponentials}

Every constant vector belongs to $\cH_{\lambda,\mu}$.  More generally, if
$\zeta,\bfa\in\C^3$ satisfy
\begin{equation}
\label{eq:null-conditions}
 \zeta\cdot\zeta=0,
 \qquad
 \zeta\cdot\bfa=0,
\end{equation}
then
\[
 \bfv_{\zeta,\bfa}(x):=\bfa e^{\zeta\cdot x}
\]
belongs to $\cH_{\lambda,\mu}$, because
\[
 \cL_{\lambda,\mu}\bfv_{\zeta,\bfa}
 =\left[
 \mu(\zeta\cdot\zeta)\bfa
 +(\lambda+\mu)\zeta(\zeta\cdot\bfa)
 \right]e^{\zeta\cdot x}=\bm{0}.
\]
The bilinear complexification in \eqref{eq:null-conditions} is essential: nonzero null
vectors do not exist for the Hermitian product.

\subsection{Kelvin potentials and static orthogonality}

The Kelvin tensor of $\cA_{\lambda,\mu}$ is
\begin{equation}
\label{eq:kelvin}
 \bfGamma_0(x)
 =\frac{\lambda+3\mu}{8\pi\mu(\lambda+2\mu)}\frac{I_3}{|x|}
 +\frac{\lambda+\mu}{8\pi\mu(\lambda+2\mu)}\frac{xx^\top}{|x|^3},
 \qquad x\ne0,
\end{equation}
and $\cA_{\lambda,\mu}\bfGamma_0=\delta_0I_3$ in distributions.  Let $\bfnu$ denote
the outward unit normal to $\pa\Om$.  The traction is
\[
 \Tnu\bfv
 :=\left[\lambda(\nabla\cdot\bfv)I_3+2\mu\eps(\bfv)\right]\bfnu.
\]
For fields with the regularity used below, the bilinear Green identity (see, for example,
\cite{McLean2000}) is
\begin{equation*}
 \int_{\Om}
 \left[(\cA_{\lambda,\mu}\bfv)\cdot\bfw
       -\bfv\cdot(\cA_{\lambda,\mu}\bfw)\right]dx
 =\int_{\pa\Om}
 \left[\bfv\cdot\Tnu\bfw-(\Tnu\bfv)\cdot\bfw\right]dS.
\end{equation*}

\begin{lemma}
\label{lem:potential-orthogonality}
Let $\bfh\in L^2(\R^3)^3$ with $\esssupp\bfh\subset\overline Q$, and put
$\bfV=\bfGamma_0*\bfh$.  If
\[
 \bfV=\bm{0}\quad\text{in }\R^3\setminus\overline\Om,
\]
then
\begin{equation*}
 \int_Q\bfh(x)\cdot\bfv(x)\,dx=0
 \qquad\forall\bfv\in\cH_{\lambda,\mu}.
\end{equation*}
\end{lemma}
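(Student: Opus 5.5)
The plan is to show that $\bfV$ vanishes on all of $\R^3\setminus\overline Q$'s exterior region $\R^3\setminus\overline\Om$ together with its traction, and then to apply the bilinear Green identity on $\Om$ with $\bfw=\bfv\in\cH_{\lambda,\mu}$.

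\emph{Regularity of $\bfV$.} Since $\bfh\in L^2$ has compact support, $\bfV=\bfGamma_0*\bfh$ lies in $H^2_{\mathrm{loc}}(\R^3)^3$. This follows from Calder\'on--Zygmund bounds for the homogeneous kernel $\nabla^2\bfGamma_0$, or equivalently from interior elliptic regularity for $\cA_{\lambda,\mu}\bfV=\bfh$ in distributions. Outside $\overline Q$, $\bfV$ is real analytic and satisfies $\cL_{\lambda,\mu}\bfV=0$.

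\emph{Vanishing of the Cauchy data on $\pa\Om$.} By hypothesis, $\bfV\equiv0$ on the open set $\R^3\setminus\overline\Om$. As $\bfV\in H^2_{\mathrm{loc}}$, its $H^{1/2}$ trace and its traction $\Tnu\bfV\in H^{-1/2}(\pa\Om)$ computed from inside $\Om$ agree with the exterior values, which are zero.

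A more robust route avoids one-sided trace arguments near a Lipschitz boundary. Choose a smooth open set $\Om'$ with $\overline Q\subset\Om'$ and $\overline\Om\subset\Om'$. Then $\pa\Om'$ lies in the exterior region, where $\bfV$ is smooth and identically zero. Apply Green's identity on $\Om'$ with $\bfv$ replaced by $\bfV$ and $\bfw$ replaced by $\bfv$. The boundary terms vanish identically, because $\bfV$ and all its derivatives are zero near $\pa\Om'$. Since $\cA_{\lambda,\mu}\bfv=0$ and $\cA_{\lambda,\mu}\bfV=\bfh$, we get
\[
 \int_{\Om'}\bfh\cdot\bfv\,dx=0 .
\]
Because $\bfh$ is supported in $\overline Q$ and $\pa Q$ has measure zero up to the a.e. convention ($\esssupp\bfh\subset\overline Q$), this integral equals $\int_Q\bfh\cdot\bfv\,dx$, which proves the lemma.

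\emph{Main obstacle.} The delicate step is justifying Green's identity when $\bfv$ is only assumed to lie in $H^1_{\mathrm{loc}}$. To handle it, first observe that a weak solution of the constant-coefficient elliptic system $\cL_{\lambda,\mu}\bfv=0$ in $\R^3$ is smooth by elliptic regularity (Weyl's lemma for strongly elliptic systems), using $\mu>0$ and $\lambda+2\mu>0$.

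If one wishes to avoid Green's identity entirely, there is an alternative. Take a cutoff $\chi\in C_c^\infty(\Om')$ with $\chi=1$ on a neighborhood of $\overline\Om$. Then
\[
 \int\bfh\cdot\bfv\,dx=\int\cA_{\lambda,\mu}\bfV\cdot(\chi\bfv)\,dx
\]
in the distributional pairing, which by symmetry of the form $a$ equals the bilinear form $a(\bfV,\chi\bfv)$ (without conjugation). Expanding $\chi\bfv$, the terms involving $\nabla\chi$ are supported where $\bfV=0$. The remaining term is $a(\bfV,\bfv)$ on $\{\chi=1\}$, which equals $\langle\bfV,\cA_{\lambda,\mu}\bfv\rangle=0$ after moving derivatives onto the smooth field $\bfv$, and this gives the claim.
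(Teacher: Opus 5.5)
Your proof is correct and is essentially the paper's argument. Both proofs use elliptic regularity to make $\bfv\in\cH_{\lambda,\mu}$ smooth, use $\bfV\in H^2_{\mathrm{loc}}$, and apply the Green--Betti identity to the pair $(\bfV,\bfv)$ with vanishing Cauchy data of $\bfV$. The only difference is where the boundary sits: you integrate over a smooth $\Om'\supset\overline\Om$, or use the cutoff/compact-support pairing $\langle\cA_{\lambda,\mu}\bfV,\bfv\rangle=\langle\bfV,\cA_{\lambda,\mu}\bfv\rangle$, so that $\bfV$ vanishes identically near the boundary; the paper stays on $\pa\Om$ and uses that $\bfV$ is smooth in a collar because $\operatorname{dist}(\overline Q,\pa\Om)>0$.

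One small caveat: your claim that $\pa Q$ is Lebesgue-null is false for a general bounded open $Q$. The final step identifying $\int_{\Om'}\bfh\cdot\bfv\,dx$ with $\int_Q\bfh\cdot\bfv\,dx$ is therefore really a reading of the statement. The paper makes the same identification silently, and in every application $\bfh=0$ a.e.\ off $Q$, so nothing is lost.
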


\begin{proof}
The whole-space potential satisfies
$\cA_{\lambda,\mu}\bfV=\bfh$ in distributions.  Since
$d_Q:=\operatorname{dist}(\overline Q,\pa\Om)>0$, the source vanishes in a full collar of
$\pa\Om$.  Constant-coefficient elliptic regularity in a smaller collar gives
$\bfV\in H^2_{\mathrm{loc}}(\R^3)^3$ and $\bfV$ is smooth in a full neighborhood of
$\pa\Om$.  The same smooth field vanishes on the exterior part of that neighborhood;
hence its displacement and traction traces vanish on $\pa\Om$.  For every $\bfv\in\cH_{\lambda,\mu}$, constant-coefficient elliptic regularity gives
$\bfv\in C^\infty(\R^3)^3$.  Since $\bfV$ is smooth in a neighborhood of the boundary,
the Green--Betti identity is legitimate for the pair $(\bfV,\bfv)$ (equivalently one may
use the weak traction identity and density).  Therefore
\begin{align*}
 \int_Q\bfh\cdot\bfv\,dx
 &=\int_{\Om}(\cA_{\lambda,\mu}\bfV)\cdot\bfv\,dx\\
 &=\int_{\Om}\bfV\cdot(\cA_{\lambda,\mu}\bfv)\,dx
   +\int_{\pa\Om}
    \left[\bfV\cdot\Tnu\bfv-(\Tnu\bfV)\cdot\bfv\right]dS\\
 &=0,
\end{align*}
because $\cA_{\lambda,\mu}\bfv=0$ and both boundary traces of $\bfV$ vanish.
\end{proof}

\subsection{One-profile vector completeness}

\begin{lemma}
\label{lem:completeness}
Let
\[
 \bfH(x)=\bfH_0(x')\psi(x_3),
 \qquad
 \bfH_0\in L^2_c(D)^3,
 \quad
 \psi\in C_c^\infty(I)\setminus\{0\}.
\]
If
\begin{equation}
\label{eq:H-all-orthogonal}
 \int_{\R^3}\bfH(x)\cdot\bfv(x)\,dx=0
 \qquad\forall\bfv\in\cH_{\lambda,\mu},
\end{equation}
then $\bfH_0=\bm{0}$ a.e. in $\R^2$.
\end{lemma}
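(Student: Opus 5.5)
We test \eqref{eq:H-all-orthogonal} against the complex static exponentials $\bfv_{\zeta,\bfa}$ of \eqref{eq:null-conditions}, choosing phases adapted to the cylindrical structure. Fix $\eta\in\R^2\setminus\{\bm 0\}$, set $\tau=|\eta|$, and take
\[
 \zeta_\pm:=(-i\eta_1,-i\eta_2,\pm\tau).
\]
Then $\zeta_\pm\cdot\zeta_\pm=-|\eta|^2+\tau^2=0$. For every $\bfa\in\C^3$ with $\zeta_\pm\cdot\bfa=0$, Fubini and the product structure of $\bfH$ give
\[
 0=\int_{\R^3}\bfH\cdot\bfa\,e^{\zeta_\pm\cdot x}\,dx
 =\bigl(\widehat{\bfH_0}(\eta)\cdot\bfa\bigr)\,\mathscr L_\psi(\pm\tau).
\]
Here $\widehat{\bfH_0}$ is the two-dimensional Fourier transform, which is entire because $\bfH_0$ has compact support.

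The first step is to restrict to $\tau\in\mathcal S_\psi$, which is open and dense in $(0,\infty)$. On this set both factors $\mathscr L_\psi(\pm\tau)$ are nonzero, so
\[
 \widehat{\bfH_0}(\eta)\cdot\bfa=0
 \quad\text{for all }\bfa\in\zeta_+^\perp\cup\zeta_-^\perp,
\]
where $\perp$ refers to the bilinear product. Each $\zeta_\pm^\perp$ is a two-dimensional complex subspace. The second step, the key algebraic point, is to show that these two subspaces together span $\C^3$. Since $\zeta_+$ and $\zeta_-$ are linearly independent (they differ in the third component and $\tau\neq0$), we have
\[
 \zeta_+^\perp+\zeta_-^\perp=(\operatorname{span}\{\zeta_+,\zeta_-\})^\perp{}^{\!\perp}\cdots,
\]
more simply $\dim(\zeta_+^\perp\cap\zeta_-^\perp)=1$, so $\dim(\zeta_+^\perp+\zeta_-^\perp)=2+2-1=3$. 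Hence $\widehat{\bfH_0}(\eta)=\bm 0$ for every $\eta$ with $|\eta|\in\mathcal S_\psi$. This is exactly the mechanism described in the introduction: one phase leaves the residual line $\zeta_+^\perp\cap\zeta_-^\perp$, and the opposite phase removes it.

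The third step is to conclude. The set $\{\eta:|\eta|\in\mathcal S_\psi\}$ is open and nonempty in $\R^2$. Since $\widehat{\bfH_0}$ is real analytic on $\R^2$, vanishing on a nonempty open set forces $\widehat{\bfH_0}\equiv\bm 0$. Plancherel then gives $\bfH_0=\bm 0$ a.e. Alternatively, one can use continuity of $\widehat{\bfH_0}$ together with density of this set.

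The only point needing care is the dimension count in the second step, namely the independence of $\zeta_\pm$ and the fact that the bilinear form on $\C^3$ is nondegenerate. With these in hand, the count is immediate. Membership of $\bfv_{\zeta,\bfa}$ in $\cH_{\lambda,\mu}$ was established in the preceding subsection.
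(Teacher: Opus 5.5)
Your proof is correct and follows the paper's argument: you test with the two opposite null phases $\zeta_\pm$, use $\mathcal S_\psi$ to divide out the profile factors, and conclude from $\zeta_+^\perp+\zeta_-^\perp=\C^3$ (the paper states the dual fact $\operatorname{span}\{\zeta_+\}\cap\operatorname{span}\{\zeta_-\}=\{\bm 0\}$), followed by continuity and density for the Fourier transform. The only blemish is the abandoned display involving $(\operatorname{span}\{\zeta_+,\zeta_-\})^{\perp\perp}$, which is wrong if read literally and should be deleted, since your dimension count already settles that step.
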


\begin{proof}
Fix $\eta=(\eta_1,\eta_2)\in\R^2\setminus\{\bm{0}\}$ and define
\begin{equation*}
 \zeta_+=(i\eta_1,i\eta_2,|\eta|),
 \qquad
 \zeta_-=(i\eta_1,i\eta_2,-|\eta|).
\end{equation*}
Both vectors are null.  For arbitrary
$\bfa_\pm\in\zeta_\pm^\perp$, the fields
$\bfa_\pm e^{\zeta_\pm\cdot x}$ are static Lam\'e solutions.  Substitution into
\eqref{eq:H-all-orthogonal} yields
\begin{equation*}
 \widehat{\bfH_0}(-\eta)\cdot\bfa_\pm\,
 \mathscr L_\psi(\pm|\eta|)=0.
\end{equation*}
If $|\eta|\in\mathcal S_\psi$, both profile factors are nonzero.  Hence
$\widehat{\bfH_0}(-\eta)$ is orthogonal to $\zeta_\pm^\perp$ for both signs, and therefore
the nondegeneracy of the complex bilinear product gives
\begin{equation}
\label{eq:polarization-intersection}
 \widehat{\bfH_0}(-\eta)
 \in\operatorname{span}\{\zeta_+\}
   \cap\operatorname{span}\{\zeta_-\}
 =\{\bm{0}\}.
\end{equation}
Since $\mathcal S_\psi$ is dense in $(0,\infty)$, the set
$\{\eta\in\R^2\setminus\{\bm{0}\}:|\eta|\in\mathcal S_\psi\}$ is dense in
$\R^2\setminus\{\bm{0}\}$.  Since
$\bfH_0$ is compactly supported and belongs to $L^2$, its Fourier transform is continuous;
thus it vanishes everywhere.  Fourier injectivity proves the claim.
\end{proof}

The intersection in \eqref{eq:polarization-intersection} is the elasticity-specific step.
A single null phase leaves one residual polarization line; the opposite phase removes it.

\subsection{The normal matrix polynomial and multi-profile completeness}

Fix $\eta=(\eta_1,\eta_2)\in\R^2\setminus\{\bm{0}\}$, put $\tau=|\eta|$, and define
\[
 \zeta(r):=(i\eta_1,i\eta_2,r),
 \qquad
 \mathscr P_\eta(r)
 :=\mu(r^2-\tau^2)I_3+(\lambda+\mu)\zeta(r)\zeta(r)^\top.
\]
The terminology concerning eigenvectors and Jordan chains of regular matrix polynomials is
used in the standard sense of \cite{GohbergLancasterRodman2009}.

\begin{lemma}
\label{lem:normal-pencil}
For $\bfa\in\C^3$,
\begin{equation}
\label{eq:normal-symbol-action}
 \cL_{\lambda,\mu}
 \left(\bfa e^{i\eta\cdot x'+rx_3}\right)
 =\mathscr P_\eta(r)\bfa\,e^{i\eta\cdot x'+rx_3}.
\end{equation}
Moreover,
\begin{equation}
\label{eq:normal-determinant}
 \det\mathscr P_\eta(r)
 =\mu^2(\lambda+2\mu)(r^2-\tau^2)^3.
\end{equation}
Assume $\lambda+\mu\ne0$.  Set $r_\pm=\pm\tau$ and
$\zeta_\pm=\zeta(r_\pm)$.  Each root $r_\pm$ has algebraic multiplicity three and
\[
 \ker\mathscr P_\eta(r_\pm)=\zeta_\pm^\perp,
 \qquad
 \dim\ker\mathscr P_\eta(r_\pm)=2.
\]
With
\begin{equation*}
 \mathfrak c_{\lambda,\mu}:=\frac{\lambda+3\mu}{\lambda+\mu},
 \qquad
 \bfa_0:=\zeta_\pm,
 \qquad
 \bfa_1:=-\mathfrak c_{\lambda,\mu}e_3,
\end{equation*}
one has the length-two chain relations
\begin{equation}
\label{eq:Jordan-chain-relations}
 \mathscr P_\eta(r_\pm)\bfa_0=\bm{0},
 \qquad
 \mathscr P_\eta(r_\pm)\bfa_1
 +\mathscr P_\eta'(r_\pm)\bfa_0=\bm{0}.
\end{equation}
Consequently,
\begin{equation}
\label{eq:generalized-static-mode}
 \bfw_{\eta,\pm}(x)
 :=\left(x_3\zeta_\pm-\mathfrak c_{\lambda,\mu}e_3\right)
 e^{i\eta\cdot x'+r_\pm x_3}
\end{equation}
belongs to $\cH_{\lambda,\mu}$.  The two partial multiplicities at $r_\pm$ are
$2$ and $1$; in particular, no Jordan chain at this root has length greater than two.

If $\lambda+\mu=0$, then
$\mathscr P_\eta(r)=\mu(r^2-\tau^2)I_3$; each root has algebraic and geometric
multiplicity three and is semisimple.
\end{lemma}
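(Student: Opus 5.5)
Everything here is a direct computation on the normal symbol, and I would take the parts in the order stated.

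\emph{Symbol and determinant.} For the symbol identity \eqref{eq:normal-symbol-action}, set $e(x)=e^{i\eta\cdot x'+rx_3}=e^{\zeta(r)\cdot x}$. Then $\nabla(\bfa e)=\bfa\zeta(r)^\top e$, so
\[
 \Delta(\bfa e)=(\zeta\cdot\zeta)\bfa e,
 \qquad
 \nabla(\nabla\cdot(\bfa e))=\zeta(\zeta\cdot\bfa)e,
 \qquad
 \zeta\cdot\zeta=r^2-\tau^2 .
\]
This gives the stated matrix. For the determinant, $\zeta\zeta^\top$ has rank one, so the matrix determinant lemma yields
\[
 \det\mathscr P_\eta(r)=\mu^3(r^2-\tau^2)^3\Bigl(1+\tfrac{(\lambda+\mu)(\zeta\cdot\zeta)}{\mu(r^2-\tau^2)}\Bigr)=\mu^2(\lambda+2\mu)(r^2-\tau^2)^3 .
\]
This holds as a polynomial identity, since both sides are polynomials agreeing off the roots. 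The algebraic multiplicity of $r_\pm$ is therefore three, and $\lambda+2\mu\ne0$ guarantees the polynomial is regular.

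\emph{Kernels and chain relations.} At $r_\pm$ we have $\mathscr P_\eta(r_\pm)=(\lambda+\mu)\zeta_\pm\zeta_\pm^\top$. Since $\zeta_\pm\ne0$, this matrix has rank one when $\lambda+\mu\ne0$, so its kernel is $\zeta_\pm^\perp$ (bilinear) and has dimension two. The first relation holds because $\zeta_\pm\cdot\zeta_\pm=0$. For the second, differentiate:
\[
 \mathscr P_\eta'(r)=2\mu rI_3+(\lambda+\mu)(e_3\zeta^\top+\zeta e_3^\top).
\]
Since $e_3\cdot\zeta_\pm=r_\pm$, this gives $\mathscr P_\eta'(r_\pm)\zeta_\pm=(\lambda+3\mu)r_\pm\zeta_\pm$. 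Also $\mathscr P_\eta(r_\pm)e_3=(\lambda+\mu)r_\pm\zeta_\pm$. Hence $\bfa_1=-\mathfrak c_{\lambda,\mu}e_3$ satisfies $\mathscr P_\eta(r_\pm)\bfa_1=-(\lambda+3\mu)r_\pm\zeta_\pm$, which cancels the other term.

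\emph{The generalized mode.} For $\bfw_{\eta,\pm}\in\cH_{\lambda,\mu}$, use $\cL_{\lambda,\mu}(\bfa(r)e^{\zeta(r)\cdot x})=\mathscr P_\eta(r)\bfa(r)e^{\zeta(r)\cdot x}$ with $\bfa(r)=\bfa_0+(r-r_\pm)\bfa_1$. Differentiate in $r$ at $r_\pm$; this commutes with $\cL$ because everything is smooth in $(x,r)$. The left side becomes $\cL[(\bfa_1+x_3\bfa_0)e]$, and the right side becomes $(\mathscr P\bfa_1+\mathscr P'\bfa_0)e+x_3\mathscr P\bfa_0e=0$. The mode is smooth, so it lies in $H^1_{\rm loc}$.

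\emph{Partial multiplicities.} This is the step I expect to be the main obstacle. The algebraic multiplicity three equals the sum of the partial multiplicities, and there are two of them, since the geometric multiplicity is two. So the possibilities are $\{2,1\}$ or $\{3,0\}$, and the latter is impossible because both partial multiplicities are at least one. Hence they are $2$ and $1$, and no chain exceeds length two.

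To make this rigorous without heavy theory, I would check it via the local Smith form. In a suitable basis, $\mathscr P_\eta(r)$ near $r_\pm$ has invariant factors $(r-r_\pm)^{k_1}$ and $(r-r_\pm)^{k_2}$, with one further unit factor. Their exponents sum to the order of the determinant, which is three. Two of them are positive, because the rank drop at $r_\pm$ is two, while the remaining factor is a unit because the rank of $\mathscr P_\eta(r_\pm)$ is one. This forces $\{k_1,k_2\}=\{1,2\}$, and the longest Jordan chain has length $\max k_i=2$, consistent with the chain exhibited above.

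\emph{The case $\lambda+\mu=0$.} The matrix is then scalar, the kernel at each root is all of $\C^3$, and the root is semisimple.
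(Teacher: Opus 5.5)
Your proposal is correct and follows essentially the same route as the paper. Both use the symbol computation and the matrix determinant lemma, the rank-one form $(\lambda+\mu)\zeta_\pm\zeta_\pm^\top$ at the roots, a direct check of the chain relations, and the count of two positive partial multiplicities summing to three. The differences are cosmetic. You obtain the generalized mode by differentiating the exponential family in $r$, which is equivalent to the paper's Leibniz expansion of $\mathscr P_\eta(\partial_3)$. You also confirm the partial-multiplicity count with the local Smith form, which makes explicit that the exhibited chain is not needed to force the multiplicities $2$ and $1$.
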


\begin{proof}
Formula \eqref{eq:normal-symbol-action} follows by substituting the exponential ansatz into
the static Lam\'e equation.  Since
$\zeta(r)\cdot\zeta(r)=r^2-\tau^2$, the matrix determinant lemma gives, whenever
$r^2\ne\tau^2$,
\begin{align*}
 \det\mathscr P_\eta(r)
 &=[\mu(r^2-\tau^2)]^3
 \left(1+\frac{\lambda+\mu}{\mu}\right)\\
 &=\mu^2(\lambda+2\mu)(r^2-\tau^2)^3.
\end{align*}
Both sides are polynomials, so \eqref{eq:normal-determinant} holds for all $r$.
At $r=r_\pm$ and $\lambda+\mu\ne0$,
\[
 \mathscr P_\eta(r_\pm)
 =(\lambda+\mu)\zeta_\pm\zeta_\pm^\top,
\]
which has rank one and kernel $\zeta_\pm^\perp$.  The order of the zero in
\eqref{eq:normal-determinant} is three, proving the algebraic-multiplicity assertion.

The first relation in \eqref{eq:Jordan-chain-relations} follows from
$\zeta_\pm\cdot\zeta_\pm=0$.  Since
\[
 \mathscr P_\eta'(r)
 =2\mu rI_3+(\lambda+\mu)
 \left(e_3\zeta(r)^\top+\zeta(r)e_3^\top\right),
\]
one obtains
\[
 \mathscr P_\eta'(r_\pm)\bfa_0
 =(\lambda+3\mu)r_\pm\zeta_\pm
\]
and
\[
 \mathscr P_\eta(r_\pm)\bfa_1
 =-(\lambda+\mu)\mathfrak c_{\lambda,\mu}
   r_\pm\zeta_\pm
 =-(\lambda+3\mu)r_\pm\zeta_\pm.
\]
This proves the second chain relation.  For a quadratic matrix polynomial,
\[
 \mathscr P_\eta(\partial_3)
 \left[(x_3\bfa_0+\bfa_1)e^{r_\pm x_3}\right]
 =e^{r_\pm x_3}
 \left[x_3\mathscr P_\eta(r_\pm)\bfa_0
       +\mathscr P_\eta(r_\pm)\bfa_1
       +\mathscr P_\eta'(r_\pm)\bfa_0\right].
\]
Together with the horizontal exponential, this proves
\eqref{eq:generalized-static-mode}.  The leading coefficient of $\mathscr P_\eta$ is
$\operatorname{diag}(\mu,\mu,\lambda+2\mu)$ and is invertible, so the matrix polynomial is
regular.  Standard matrix-polynomial theory \cite{GohbergLancasterRodman2009} gives exactly
$\dim\ker\mathscr P_\eta(r_\pm)=2$ Jordan chains at $r_\pm$, whose positive lengths sum
to the algebraic multiplicity three.  The displayed relations produce a chain of length at
least two.  The two chain lengths are therefore exactly $2$ and $1$, and the length-two
chain cannot be extended.  The last assertion follows immediately when
$\lambda+\mu=0$.
\end{proof}

Thus the two normal roots and their first generalized chains supply, for a separated scalar
profile multiplied by a fixed vector, one transform-value equation and one first-derivative
equation per sign.  Four is therefore the largest number of profile equations furnished by
this particular normal-chain construction.  We do not claim maximality among all globally
defined static Lam\'e probes.

\begin{lemma}
\label{lem:multi-profile-completeness}
Let $\bfM\in\R^3\setminus\{\bm{0}\}$ and suppose
$\theta_1,\ldots,\theta_N\in L^2_c(D;\R)$ and
\[
 \bfH(x)=\bfM\sum_{k=1}^N\theta_k(x')\psi_k(x_3).
\]
Assume
\begin{equation}
\label{eq:multi-profile-orthogonality}
 \int_{\R^3}\bfH(x)\cdot\bfv(x)\,dx=0
 \qquad\forall\bfv\in\cH_{\lambda,\mu}.
\end{equation}
Then $\theta_1=\cdots=\theta_N=0$ a.e. in $\R^2$ under either of the following
conditions:
\begin{enumerate}
\item[(i)] $1\le N\le2$ and $\Psi=(\psi_1,\ldots,\psi_N)$ is two-sided Laplace
nondegenerate;
\item[(ii)] $1\le N\le4$, $\lambda+\mu\ne0$, and $\Psi$ is Hermite--Laplace
nondegenerate.
\end{enumerate}
\end{lemma}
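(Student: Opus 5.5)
We test the orthogonality relation \eqref{eq:multi-profile-orthogonality} against the pure exponential modes $\bfa_\pm e^{\zeta_\pm\cdot x}$ from Lemma~\ref{lem:completeness}, and in case (ii) also against the generalized modes $\bfw_{\eta,\pm}$ of Lemma~\ref{lem:normal-pencil}. This gives, for each $\eta$, a linear system in the unknowns $\widehat{\theta_k}(-\eta)$ whose matrix is $\mathscr V_\Psi$ or $\mathscr H_\Psi$ evaluated at $\tau=|\eta|$. Invertibility on a dense set of $\tau$, together with continuity of the Fourier transforms, then finishes the proof.

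Fix $\eta\ne\bm{0}$, put $\tau=|\eta|$ and $\Theta_k:=\widehat{\theta_k}(-\eta)$. Fubini with $\bfv=\bfa e^{\zeta_\pm\cdot x}$ and $\bfa\in\zeta_\pm^\perp$ gives
\[
 (\bfM\cdot\bfa)\sum_{k}\Theta_k\mathscr L_{\psi_k}(\pm\tau)=0 .
\]
Since $\bfM$ is real and nonzero, $\bfM\notin\operatorname{span}\{\zeta_\pm\}$, because $\zeta_\pm$ has a nonzero real part $\pm\tau e_3$ and a nonzero imaginary part $(\eta,0)$. Hence $\bfM$ is not bilinearly orthogonal to all of $\zeta_\pm^\perp$, and we may choose $\bfa_\pm\in\zeta_\pm^\perp$ with $\bfM\cdot\bfa_\pm=1$. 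This yields the two value rows
\[
 \sum_k\Theta_k\mathscr L_{\psi_k}(\pm\tau)=0 .
\]
In case (i) the matrix $\mathscr V_\Psi(\tau)$ has full column rank for $\tau\in\mathcal S^{(0)}_\Psi$, which is open and dense. So $\Theta=0$ for $\eta$ in a dense subset of $\R^2$, and continuity of $\widehat{\theta_k}$ (the $\theta_k$ are compactly supported $L^2$ functions) together with Fourier injectivity gives $\theta_k=0$.

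For case (ii) we insert $\bfw_{\eta,\pm}$, which lies in $\cH_{\lambda,\mu}$ by Lemma~\ref{lem:normal-pencil}. Using $\int t\psi(t)e^{\pm\tau t}dt=\mathscr L'_\psi(\pm\tau)$, this gives
\[
 (\bfM\cdot\zeta_\pm)\sum_k\Theta_k\mathscr L'_{\psi_k}(\pm\tau)
 -\mathfrak c_{\lambda,\mu}M_3\sum_k\Theta_k\mathscr L_{\psi_k}(\pm\tau)=0 .
\]
By the value rows already obtained, the second sum vanishes. Hence, whenever $\bfM\cdot\zeta_\pm\ne0$, the derivative rows $\sum_k\Theta_k\mathscr L'_{\psi_k}(\pm\tau)=0$ follow. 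Together with the value rows this is exactly $\mathscr H_\Psi(\tau)\Theta=0$.

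The main obstacle is the degenerate polarization set where $\bfM\cdot\zeta_\pm=\bfM'\cdot i\eta\pm M_3\tau=0$. Since $\eta$ is real, this requires both $\bfM'\cdot\eta=0$ and $M_3\tau=0$, which is possible only if $M_3=0$ and $\eta\perp\bfM'$. That exceptional set is a line in $\R^2$, so its complement is open and dense. On the complement, with $|\eta|$ taken in the open dense set $\mathcal S^{(1)}_\Psi$, full column rank of $\mathscr H_\Psi(\tau)$ forces $\Theta=0$. The set of such $\eta$ is dense in $\R^2$, since it is the intersection of a radial open dense set with the complement of a line. Continuity of the Fourier transforms and Fourier injectivity then give $\theta_1=\cdots=\theta_N=0$.
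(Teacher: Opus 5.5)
Your proposal is correct and follows the paper's proof essentially step for step. It gets the value rows from the pure null modes $\bfa_\pm e^{\zeta_\pm\cdot x}$ and the derivative rows from the Jordan-chain modes $\bfw_{\eta,\pm}$, with the $\mathfrak c_{\lambda,\mu}M_3$ term killed by the value rows; it then removes the exceptional line $\bfM'\cdot\eta=0$ when $M_3=0$ and concludes by density, continuity and injectivity of the Fourier transforms. The only small point is your justification that $\bfM\notin\operatorname{span}_{\C}\{\zeta_\pm\}$: nonzero real and imaginary parts alone would not suffice (consider $(1+i)(1,0,0)^\top$). What you actually use is that $\pm\tau e_3$ and $(\eta,0)$ are $\R$-linearly independent, or, more cleanly as in the paper, that $\bfM\cdot\bfM=|\bfM|^2\ne0$ while $\zeta_\pm\cdot\zeta_\pm=0$.
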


\begin{proof}
Fix $\eta\in\R^2\setminus\{\bm{0}\}$, write $\tau=|\eta|$, and set
$\zeta_\pm=(i\eta_1,i\eta_2,\pm\tau)$.  The real vector $\bfM$ cannot belong to
$\operatorname{span}_{\C}\{\zeta_\pm\}$, since otherwise
$|\bfM|^2=\bfM\cdot\bfM=0$.  Hence there is
$\bfa_\pm\in\zeta_\pm^\perp$ with $\bfM\cdot\bfa_\pm\ne0$.  Testing
\eqref{eq:multi-profile-orthogonality} with
$\bfa_\pm e^{\zeta_\pm\cdot x}$ gives
\begin{equation}
\label{eq:profile-value-equations}
 \sum_{k=1}^N\widehat{\theta_k}(-\eta)
 \mathscr L_{\psi_k}(\pm\tau)=0.
\end{equation}
Under condition (i), these equations form
\[
 \mathscr V_\Psi(\tau)
 \begin{pmatrix}
  \widehat{\theta_1}(-\eta)\\[-1mm]
  \vdots\\[-1mm]
  \widehat{\theta_N}(-\eta)
 \end{pmatrix}=0.
\]
They force all Fourier coefficients to vanish whenever
$\tau\in\mathcal S_\Psi^{(0)}$.

Assume now condition (ii).  Testing with the generalized modes
\eqref{eq:generalized-static-mode} gives
\begin{align*}
0={}&(\bfM\cdot\zeta_\pm)
 \sum_{k=1}^N\widehat{\theta_k}(-\eta)
       \mathscr L'_{\psi_k}(\pm\tau)
 \\
&-\mathfrak c_{\lambda,\mu}M_3
 \sum_{k=1}^N\widehat{\theta_k}(-\eta)
       \mathscr L_{\psi_k}(\pm\tau).
\end{align*}
The second sum vanishes by \eqref{eq:profile-value-equations}.  Moreover,
\[
 \bfM\cdot\zeta_\pm=i\bfM'\cdot\eta\pm M_3\tau,
\qquad \bfM':=(M_1,M_2).
\]
If $M_3\ne0$, this factor is nonzero for every $\eta\ne\bm{0}$; if $M_3=0$, it vanishes only
on the line $\bfM'\cdot\eta=0$.  Hence, on an open dense set of horizontal frequencies,
\begin{equation*}
 \sum_{k=1}^N\widehat{\theta_k}(-\eta)
       \mathscr L'_{\psi_k}(\pm\tau)=0.
\end{equation*}
Together with \eqref{eq:profile-value-equations}, this is the system
\[
 \mathscr H_\Psi(\tau)
 \begin{pmatrix}
  \widehat{\theta_1}(-\eta)\\[-1mm]
  \vdots\\[-1mm]
  \widehat{\theta_N}(-\eta)
 \end{pmatrix}=0
\]
whenever $\tau\in\mathcal S_\Psi^{(1)}$ and the polarization factor is nonzero.

In case (i), the set
$\{\eta\ne\bm{0}:|\eta|\in\mathcal S_\Psi^{(0)}\}$ is dense in $\R^2$.  In case (ii),
intersecting the analogous radial set for $\mathcal S_\Psi^{(1)}$ with the complement of
the exceptional line $\bfM'\cdot\eta=0$ (when $M_3=0$) still leaves a dense subset of
$\R^2$.  The functions
$\theta_k$ are compactly supported in $L^2$, so their Fourier transforms are continuous.
They vanish on a dense set and hence everywhere.  Fourier injectivity proves the result.
\end{proof}

\begin{proof}[Proof of Proposition~\ref{prop:translated-Hermite}]
Write $\Lambda(z)=\mathscr L_\varphi(z)$.  Then
\[
 \mathscr L_{\psi_k}(z)=e^{d_kz}\Lambda(z),
 \qquad
 \mathscr L'_{\psi_k}(z)=e^{d_kz}\big(d_k\Lambda(z)+\Lambda'(z)\big).
\]
For $i<j$,
\[
 \det\mathscr V_{(\psi_i,\psi_j)}(z)
 =2\sinh((d_i-d_j)z)\Lambda(z)\Lambda(-z)\not\equiv0,
\]
so every one- or two-profile subfamily is two-sided Laplace nondegenerate.

For the four-column Hermite matrix, expand the second and fourth rows by linearity.
Terms containing $\Lambda'(z)$ or $\Lambda'(-z)$ have two proportional rows.  Their
determinants vanish.  Factoring the remaining rows gives
\begin{equation}
\label{eq:translated-Hermite-factorization}
 \det\mathscr H_{(\psi_1,\ldots,\psi_4)}(z)
 =\Lambda(z)^2\Lambda(-z)^2D_d(z),
\end{equation}
where
\[
 D_d(z)
 :=\det
 \begin{pmatrix}
 e^{d_1z}&e^{d_2z}&e^{d_3z}&e^{d_4z}\\
 d_1e^{d_1z}&d_2e^{d_2z}&d_3e^{d_3z}&d_4e^{d_4z}\\
 e^{-d_1z}&e^{-d_2z}&e^{-d_3z}&e^{-d_4z}\\
 d_1e^{-d_1z}&d_2e^{-d_2z}&d_3e^{-d_3z}&d_4e^{-d_4z}
 \end{pmatrix}.
\]
Expand by two columns in the upper block and the complementary columns in the lower block.
The term with upper columns $\{3,4\}$ and lower columns $\{1,2\}$ is
\[
 \pm(d_4-d_3)(d_2-d_1)
 e^{(d_3+d_4-d_1-d_2)z}.
\]
For every other two-element set $S\ne\{3,4\}$,
\[
 \sum_{j\in S}d_j-\sum_{j\notin S}d_j
 <d_3+d_4-d_1-d_2.
\]
Thus this term has the unique largest exponential rate as $z\to+\infty$.  Hence
\[
 D_d(z)
 =\pm(d_4-d_3)(d_2-d_1)
 e^{(d_3+d_4-d_1-d_2)z}(1+o(1)).
\]
Hence the determinant in \eqref{eq:translated-Hermite-factorization} is not identically
zero.  A nontrivial entire function cannot vanish on the whole positive real axis, so the
full four-column matrix is invertible on an open dense set of positive radii.  At each such
radius, every subfamily of columns is linearly independent.  This proves the claim.
\end{proof}

\subsection{Kernel of the reduced constant-vector transform}

Nonradiating elastic sources and their geometry have been studied, for example, in
\cite{BlastenLin2019}.  The following elementary construction records the limitation of the
specific static transform used later.

\begin{proposition}
\label{prop:general-density-obstruction}
For every $\bfM\in\R^3\setminus\{\bm{0}\}$, there are infinitely many linearly independent
$\omega\in C_c^\infty(Q)$ such that
\begin{equation}
\label{eq:reduced-kernel}
 \int_Q\omega(x)\bfM\cdot\bfv(x)\,dx=0
 \qquad\forall\bfv\in\cH_{\lambda,\mu}.
\end{equation}
\end{proposition}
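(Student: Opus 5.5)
Take $\omega$ to be a divergence of a Lamé-type image. Fix $\bfM$ and choose $\bm\Phi\in C_c^\infty(Q)^3$. Set $\omega\bfM:=\cA_{\lambda,\mu}\bm\Phi$, which requires $\cA_{\lambda,\mu}\bm\Phi$ to be parallel to $\bfM$. For such $\bm\Phi$ and any $\bfv\in\cH_{\lambda,\mu}$, the Green--Betti identity on a ball containing $\overline Q$ gives
\[
 \int_Q\omega\,\bfM\cdot\bfv\,dx=\int_Q(\cA_{\lambda,\mu}\bm\Phi)\cdot\bfv\,dx=\int_Q\bm\Phi\cdot\cA_{\lambda,\mu}\bfv\,dx=0 .
\]
No boundary terms appear because $\bm\Phi$ is compactly supported, and $\bfv$ is smooth by elliptic regularity. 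It is the same computation as in the proof of Lemma~\ref{lem:potential-orthogonality}, with $\bfV=\bm\Phi$ itself compactly supported. So the task reduces to producing infinitely many independent compactly supported $\bm\Phi$ whose Lamé image is a scalar multiple of the fixed direction $\bfM$.

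To build these, I would start from a scalar potential. Take $\chi\in C_c^\infty(Q)$ and define $\bm\Phi:=P(\partial)\chi$, where $P(\partial)$ is a constant-coefficient matrix-valued (here column-vector) differential operator chosen so that $\cA_{\lambda,\mu}P(\partial)=\bfM\,p(\partial)$ for a scalar operator $p$. The natural choice comes from the adjugate of the Lamé symbol. Since $\cA_{\lambda,\mu}$ has symbol $A(\xi)=\mu|\xi|^2I_3+(\lambda+\mu)\xi\xi^\top$, we have $A(\xi)\,\mathrm{adj}A(\xi)=\det A(\xi)\,I_3=\mu^2(\lambda+2\mu)|\xi|^6I_3$. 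Taking $P(\xi):=\mathrm{adj}A(\xi)\bfM$, which is polynomial, gives $A(\xi)P(\xi)=\mu^2(\lambda+2\mu)|\xi|^6\bfM$. Explicitly one can use $P(\partial)=\mu^{-1}\big[(\lambda+2\mu)\Delta\bfM-(\lambda+\mu)\nabla(\bfM\cdot\nabla)\big]$ (up to constants), whose image under $\cA_{\lambda,\mu}$ is a multiple of $\Delta^2\bfM$. I will verify this identity directly. Then $\bm\Phi=P(\partial)\chi\in C_c^\infty(Q)^3$ and $\omega=c\,\Delta^2\chi$ with $c\neq0$, so $\omega\in C_c^\infty(Q)$.

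Linear independence is then easy. The map $\chi\mapsto\Delta^2\chi$ is injective on $C_c^\infty(Q)$: a compactly supported biharmonic function vanishes, for example by Fourier transform, since $|\xi|^4\widehat\chi=0$ forces $\widehat\chi=0$. Hence any infinite linearly independent family $\{\chi_n\}\subset C_c^\infty(Q)$, such as bumps with disjoint supports, yields an infinite linearly independent family $\{\omega_n\}$.

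The only real obstacle is the symbol algebra: checking that $\mathrm{adj}A(\xi)\bfM$ has the stated polynomial form and that $A(\xi)$ applied to it gives exactly $\mu^2(\lambda+2\mu)|\xi|^6\bfM$. I would do this by decomposing $\bfM$ into components along $\xi$ and transverse to it. Alternatively, one can skip the explicit form entirely by quoting $A\,\mathrm{adj}A=\det A\,I_3$ together with the determinant computation analogous to \eqref{eq:normal-determinant}.
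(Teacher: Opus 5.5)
Your proof is correct, and it produces the same kernel family as the paper. With your explicit operator one gets $\nabla\cdot\bm\Phi=\bfM\cdot\nabla\Delta\chi$, hence $\cA_{\lambda,\mu}\bm\Phi=-(\lambda+2\mu)\Delta^2\chi\,\bfM$. So your $\omega=-(\lambda+2\mu)\Delta^2\chi$ is the paper's $\omega=\Delta^2\omega_0$ up to a nonzero constant.

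The difference is which side of the pairing carries the algebra. The paper observes that every $\bfv\in\cH_{\lambda,\mu}$ is biharmonic: take the divergence of the Lam\'e equation, then apply $\Delta$ to it. Hence $\bfM\cdot\bfv$ is biharmonic, and $\Delta^2$ is moved off $\omega_0$ by integration by parts. You instead factor $\Delta^2\chi\,\bfM$ through $\cA_{\lambda,\mu}$, using the cofactor symbol $B(\xi)=(\lambda+2\mu)|\xi|^2I_3-(\lambda+\mu)\xi\xi^\top$, and move $\cA_{\lambda,\mu}$ onto $\bfv$.

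Both rest on the same identity $A(\xi)B(\xi)=\mu(\lambda+2\mu)|\xi|^4I_3$, read in opposite directions. The paper's version is shorter. Yours shows explicitly that every $\omega$ with $\omega\bfM\in\cA_{\lambda,\mu}\big(C_c^\infty(Q)^3\big)$ lies in the kernel. That framing is the nonradiating-source mechanism, and it suggests how one might probe a nonconstant weight. Your independence argument, via injectivity of $\Delta^2$ on $C_c^\infty$ through the Fourier transform, is fine and slightly more general than the paper's disjoint supports.

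One small inaccuracy: the degree-two operator you write down is not the adjugate ``up to constants.'' The full adjugate is $\mathrm{adj}A(\xi)=\mu|\xi|^2B(\xi)$, which corresponds to your operator composed with a nonzero multiple of $\Delta$. It yields $\omega\propto\Delta^3\chi$, not $\Delta^2\chi$. Either choice proves the proposition, since $\Delta^3$ is also injective on $C_c^\infty$, but you should not present the two constructions as the same.
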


\begin{proof}
If $\bfv\in\cH_{\lambda,\mu}$, then taking the divergence of the Lam\'e equation and then
applying $\Delta$ gives $\Delta^2\bfv=0$.  Thus $\bfM\cdot\bfv$ is biharmonic.  For any
$\omega_0\in C_c^\infty(Q)$ with $\Delta^2\omega_0\not\equiv0$, set
$\omega=\Delta^2\omega_0$.  Two
integrations by parts give
\[
 \int_Q\omega\,\bfM\cdot\bfv\,dx
 =\int_Q\omega_0\,\Delta^2(\bfM\cdot\bfv)\,dx=0.
\]
Choosing functions $\omega_0$ with mutually disjoint supports yields an infinite linearly
independent family.
\end{proof}

Proposition~\ref{prop:general-density-obstruction} concerns only the transform
\eqref{eq:reduced-kernel}.  It neither constructs two configurations with the same passive
data nor characterizes the nullspace of the full pair of weights in
\eqref{eq:intro-density-identities}.

\section{Laplace reduction and identification of the weighted initial state}
\label{sec:laplace}

\subsection{Variational and integral formulations}

For $s>0$, define
\[
 \widetilde{\bfu}(s):=\int_0^\infty e^{-st}\bfu(t)\,dt
 \quad\text{in }H^1(\R^3)^3.
\]
The polynomial bound \eqref{eq:H1-growth} justifies the Bochner integral.  Let
$V=H^1(\R^3)^3$ and use $H_\rho$ as the pivot space.  The abstract energy equation gives
$\pa_t^2\bfu\in C([0,\infty);V')$.  Integrating first on $(0,T)$ in $V'$ gives
\begin{align*}
 \int_0^T e^{-st}\pa_t^2\bfu(t)\,dt
 ={}&e^{-sT}\big(\pa_t\bfu(T)+s\bfu(T)\big)-\bfg-s\bff\\
 &+s^2\int_0^T e^{-st}\bfu(t)\,dt.
\end{align*}
By \eqref{eq:H1-growth}, the first term tends to zero in $V'$ as $T\to\infty$.
Consequently,
\[
 \int_0^\infty e^{-st}\pa_t^2\bfu(t)\,dt
 =s^2\widetilde{\bfu}(s)-s\bff-\bfg.
\]
Testing this identity against a fixed field in $V$ in the weak wave equation yields
\begin{equation}
\label{eq:background-laplace}
 (\cA_{\lambda,\mu}+\rho_0s^2)\widetilde{\bfu}
 =\bfG+s\bfF-s^2q\widetilde{\bfu}.
\end{equation}
Equivalently, $\widetilde{\bfu}$ is the unique element of $H^1(\R^3)^3$ satisfying
\begin{equation}
\label{eq:laplace-variational}
 a(\widetilde{\bfu},\bfv)
 +s^2\int_{\R^3}\rho\widetilde{\bfu}\cdot\overline{\bfv}\,dx
 =\int_{\R^3}(\bfG+s\bfF)\cdot\overline{\bfv}\,dx
 \quad\forall\bfv\in H^1(\R^3)^3.
\end{equation}
Indeed, the left-hand side is coercive by \eqref{eq:form-Fourier} and the positive mass
term, so uniqueness follows from Lax--Milgram.

Set
\begin{equation*}
 \kappa_{\mathrm S}=s\sqrt{\frac{\rho_0}{\mu}},
 \qquad
 \kappa_{\mathrm P}=s\sqrt{\frac{\rho_0}{\lambda+2\mu}},
\end{equation*}
and let $\Phi_\kappa(x)=e^{-\kappa|x|}/(4\pi|x|)$.  The modified Kupradze tensor is
\begin{equation}
\label{eq:modified-kupradze}
 \bfGamma_s(x)
 =\frac1\mu\Phi_{\kappa_{\mathrm S}}(x)I_3
 -\frac1{\rho_0s^2}\nabla\nabla^\top
  \left(\Phi_{\kappa_{\mathrm S}}(x)-\Phi_{\kappa_{\mathrm P}}(x)\right).
\end{equation}
Its Fourier multiplier is
\begin{equation}
\label{eq:Gamma-symbol}
 \widehat{\bfGamma_s}(\xi)
 =\frac{P_T(\xi)}{\mu|\xi|^2+\rho_0s^2}
  +\frac{P_L(\xi)}{(\lambda+2\mu)|\xi|^2+\rho_0s^2},
\end{equation}
so
\begin{equation*}
 (\cA_{\lambda,\mu}+\rho_0s^2)\bfGamma_s=\delta_0I_3.
\end{equation*}
Applying this resolvent to \eqref{eq:background-laplace} gives the
Lippmann--Schwinger equation
\begin{equation}
\label{eq:Lippmann-Schwinger}
 \widetilde{\bfu}
 =\bfGamma_s*(\bfG+s\bfF)-s^2\bfGamma_s*(q\widetilde{\bfu}).
\end{equation}
The equivalence between \eqref{eq:laplace-variational} and
\eqref{eq:Lippmann-Schwinger} is recorded in Appendix~\ref{app:variational}.

\subsection{Zero-frequency expansion of the modified Kupradze tensor}

Fix $R>0$ with $\overline\Om\subset B_R$.  The kernel $\bfK_2$ below is given explicitly
in \eqref{eq:K2}; it extends continuously to the origin by setting $\bfK_2(0)=0$.

\begin{lemma}
\label{lem:kernel-expansion}
There are $s_0>0$ and $C_R>0$ such that, for
$0<s<s_0$ and $0<|x|\le2R$,
\begin{equation*}
 \bfGamma_s(x)
 =\bfGamma_0(x)-s\gamma_1I_3+s^2\bfK_2(x)+\bfE_s(x),
 \qquad
 |\bfE_s(x)|\le C_Rs^3|x|^2.
\end{equation*}
Consequently, if $\bfh\in L^2(\R^3)^3$ and
$\esssupp\bfh\subset\overline Q$, then
\begin{equation}
\label{eq:operator-expansion}
 \bfGamma_s*\bfh
 =\bfGamma_0*\bfh-s\gamma_1\int_Q\bfh(y)\,dy
  +s^2\bfK_2*\bfh
  +O_{L^\infty(B_R)}(s^3\|\bfh\|_{L^1(Q)}).
\end{equation}
Moreover, uniformly for $0<s<s_0$,
\begin{align}
\label{eq:L2-Linf}
 \|\bfGamma_s*\bfh\|_{L^\infty(B_R)}
 &\le C_R\|\bfh\|_{L^2(Q)},\\
\label{eq:B-bound}
 \|\bfGamma_s*(q\bfv)\|_{L^\infty(B_R)}
 &\le C_R\|q\|_{L^\infty(Q)}\|\bfv\|_{L^\infty(B_R)},
\end{align}
whenever the displayed expressions are defined and the factors are supported in
$\overline Q$.  Finally,
\begin{equation}
\label{eq:first-order-operator-bound}
 \left\|\bfGamma_s*\bfh-\bfGamma_0*\bfh
       +s\gamma_1\int_Q\bfh(y)\,dy\right\|_{L^\infty(B_R)}
 \le C_Rs^2\|\bfh\|_{L^2(Q)}.
\end{equation}
\end{lemma}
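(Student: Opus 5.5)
The plan is to expand the scalar Yukawa potentials entering \eqref{eq:modified-kupradze} in powers of $\kappa$, and then read off the tensor expansion. Write $\Phi_\kappa(x)=\frac{1}{4\pi}\sum_{n\ge0}\frac{(-\kappa)^n|x|^{n-1}}{n!}$. The first term is
\[
\frac1\mu\Phi_{\kappa_{\mathrm S}}I_3=\frac{1}{4\pi\mu}\Big(\frac1{|x|}-\kappa_{\mathrm S}+\frac{\kappa_{\mathrm S}^2}{2}|x|-\frac{\kappa_{\mathrm S}^3}{6}|x|^2+\dots\Big)I_3 .
\]
For the second term we need
\[
\Phi_{\kappa_{\mathrm S}}-\Phi_{\kappa_{\mathrm P}}=\frac1{4\pi}\sum_{n\ge1}\frac{(-1)^n(\kappa_{\mathrm S}^n-\kappa_{\mathrm P}^n)}{n!}|x|^{n-1}.
\]
The $n=1$ term is constant, so it is killed by $\nabla\nabla^\top$. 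The $n=2$ term gives $\frac{1}{8\pi}(\kappa_{\mathrm S}^2-\kappa_{\mathrm P}^2)\nabla\nabla^\top|x|$. Dividing by $\rho_0s^2$ and using $\kappa^2_{\mathrm S}-\kappa^2_{\mathrm P}=\rho_0s^2(\mu^{-1}-(\lambda+2\mu)^{-1})$, this piece is $s$-independent. Combining it with $\frac1{4\pi\mu|x|}I_3$ should reproduce \eqref{eq:kelvin}; this is a check via $\nabla\nabla^\top|x|=(I_3-\hat x\hat x^\top)/|x|$. The $n=3$ term, $\nabla\nabla^\top|x|^2=2I_3$, gives a constant matrix times $s^3/s^2=s$. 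Together with $-\kappa_{\mathrm S}/(4\pi\mu)$ this yields the $-s\gamma_1 I_3$ term. Verifying the value of $\gamma_1$ is a small computation:
\[
\frac{\sqrt{\rho_0}}{4\pi}\mu^{-3/2}+\frac{1}{12\pi}\sqrt{\rho_0}\big(\mu^{-3/2}-(\lambda+2\mu)^{-3/2}\big)\cdot(-1)\cdots,
\]
and I expect it to match \eqref{eq:gamma1}, so the sign bookkeeping must be done carefully. The $s^2$ order collects the $\kappa_{\mathrm S}^2|x|/(8\pi\mu)$ term and the $n=4$ term $\propto(\kappa_{\mathrm S}^4-\kappa_{\mathrm P}^4)\nabla\nabla^\top|x|^3/(\rho_0s^2)$. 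Both are homogeneous of degree one in $x$, so I define $\bfK_2$ as their sum; it is continuous with $\bfK_2(0)=0$.

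For the remainder $\bfE_s$ I use the tails. The scalar tail $\sum_{n\ge3}$ in the first term is bounded by $C\kappa^3|x|^2e^{\kappa|x|}$. In the second term the tail is $\sum_{n\ge5}$, and applying $\nabla\nabla^\top$ to $|x|^{n-1}$ gives at most $C n^2|x|^{n-3}$, so the tail is bounded by $s^{-2}\sum_{n\ge5}C n^2\kappa^n|x|^{n-3}/n!\le Cs^3|x|^2$. The series converges uniformly for $|x|\le2R$ and $s<s_0$, so termwise differentiation is legitimate, since the series is entire in $|x|$ after the odd/even structure; more precisely, it is a power series in $r=|x|$ whose Hessian is controlled as above. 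This gives $|\bfE_s(x)|\le C_Rs^3|x|^2$.

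The consequences follow by convolution. For $x\in B_R$ and $y\in Q\subset B_R$ we have $|x-y|\le2R$. Then \eqref{eq:operator-expansion} follows since $|\bfE_s*\bfh|\le C_Rs^3(2R)^2\|\bfh\|_{L^1}$. For \eqref{eq:L2-Linf}, I bound $|\bfGamma_s(z)|\le C|z|^{-1}$ uniformly for $|z|\le2R$, $s<s_0$, using the expansion. Since $|z|^{-1}\in L^2(B_{2R})$, Cauchy--Schwarz gives the bound. Then \eqref{eq:B-bound} follows from $|z|^{-1}\in L^1(B_{2R})$. Finally, \eqref{eq:first-order-operator-bound} uses $|s^2\bfK_2+\bfE_s|\le Cs^2$ on $B_{2R}$ together with $\|\bfh\|_{L^1(Q)}\le|Q|^{1/2}\|\bfh\|_{L^2}$.

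The main obstacle is the remainder estimate for the Hessian term. The $1/s^2$ prefactor has to be cancelled by the $\kappa^n$ factors, uniformly down to $x\to0$ where the derivatives of $|x|^{n-1}$ are singular. Only the $n\ge5$ terms enter, and for these $|x|^{n-3}$ is harmless, so the estimate should close.
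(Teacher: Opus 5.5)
Your proposal is correct and follows the paper's route in Appendix~\ref{app:kernel}: expand $\Phi_\kappa$ in $\kappa$, read off the Kelvin, $-s\gamma_1I_3$ and $s^2\bfK_2$ coefficients, bound the Hessian remainder with the radial Hessian formula so that $\kappa^5$ absorbs the $(\rho_0s^2)^{-1}$ prefactor, and derive the convolution bounds from $|\bfGamma_s(z)|\le C|z|^{-1}$ and $|s^2\bfK_2+\bfE_s|\le Cs^2$ on $B_{2R}$. The only technical difference is that the paper truncates at fifth order with the integral remainder $\mathfrak r_5$, whose first two derivatives are bounded, instead of bounding the full series tail termwise. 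Your unfinished $\gamma_1$ arithmetic does close: $\tfrac{\sqrt{\rho_0}}{12\pi}\bigl[3\mu^{-3/2}-\mu^{-3/2}+(\lambda+2\mu)^{-3/2}\bigr]=\gamma_1$.
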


\begin{proof}
The expansion and the differentiated remainder are proved in Appendix~\ref{app:kernel}.
The only singular term is the Kelvin tensor, of order $|x|^{-1}$.  Thus
$|\bfGamma_s(x)|\le C_R|x|^{-1}$ on $0<|x|\le2R$.  Since $|x|^{-2}$ is locally
integrable in three dimensions, Cauchy--Schwarz gives \eqref{eq:L2-Linf}; integration
against $|x-y|^{-1}$ gives \eqref{eq:B-bound}.  Subtracting the first two terms of
\eqref{eq:operator-expansion}, using the local boundedness of $\bfK_2$, and applying
$L^1(Q)\hookleftarrow L^2(Q)$ gives \eqref{eq:first-order-operator-bound}.
\end{proof}

\subsection{Uniform expansion of the transformed field}

\begin{proposition}
\label{prop:field-expansion}
Let $q\in L^\infty(\R^3)$ satisfy $\esssupp q\subset\overline Q$ and assume
\begin{equation}
\label{eq:field-positive-density}
 \rho_-\le\rho_0+q\le\rho_+.
\end{equation}
Let $\bfF,\bfG\in L^2_c(Q)^3$ and fix $R$ with $\overline\Om\subset B_R$.  For all
sufficiently small $s>0$, the restriction of \eqref{eq:Lippmann-Schwinger} to $B_R$ has a
unique fixed point in $L^\infty(B_R)^3$, equal to the variational solution.  It satisfies
\begin{equation}
\label{eq:field-expansion}
 \widetilde{\bfu}
 =\bfU_0+s\bfU_1+s^2\bfU_2+O_{L^\infty(B_R)}(s^3),
\end{equation}
where
\begin{equation*}
 \bfU_0=\bfGamma_0*\bfG,
 \qquad
 \bfU_1=\bfGamma_0*\bfF-\gamma_1\int_Q\bfG(y)\,dy,
\end{equation*}
and
\begin{equation}
\label{eq:U2}
 \bfU_2
 =\bfK_2*\bfG-\gamma_1\int_Q\bfF(y)\,dy-\bfGamma_0*(q\bfU_0).
\end{equation}
More precisely, for every $K_q,K_F,K_G>0$, there are
\[
 s_*=s_*(R,Q,\lambda,\mu,\rho_0,K_q)>0,
 \qquad
 C_*=C_*(R,Q,\lambda,\mu,\rho_0,K_q,K_F,K_G)>0,
\]
such that the remainder in \eqref{eq:field-expansion} is bounded by $C_*s^3$ whenever
\[
 \|q\|_{L^\infty(Q)}\le K_q,\qquad
 \|\bfF\|_{L^2(Q)}\le K_F,\qquad
 \|\bfG\|_{L^2(Q)}\le K_G,
 \qquad 0<s<s_*.
\]
\end{proposition}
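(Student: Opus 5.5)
The plan is to treat \eqref{eq:Lippmann-Schwinger}, restricted to $B_R$, as a fixed-point problem in $X:=L^\infty(B_R)^3$. Define
\[
 \cB_s\bfw:=\bfGamma_s*(q\bfw),
 \qquad
 T_s\bfw:=\bfGamma_s*(\bfG+s\bfF)-s^2\cB_s\bfw .
\]
Since $q$ is supported in $\overline Q\subset B_R$, only values of $\bfw$ on $B_R$ enter. By \eqref{eq:B-bound},
\[
 \|s^2\cB_s\|_{X\to X}\le C_RK_qs^2 .
\]
Choosing $s_*$ with $C_RK_qs_*^2\le\tfrac12$ (and $s_*<s_0$) makes $T_s$ a contraction, which gives a unique fixed point in $X$. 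It also gives the a priori bound
\[
 \|\widetilde{\bfu}\|_X\le 2C_R(K_G+sK_F),
\]
using \eqref{eq:L2-Linf}. The constants depend only on $R,Q,\lambda,\mu,\rho_0,K_q$, as claimed.

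To identify this fixed point with the variational solution, I will invoke Appendix~\ref{app:variational}. The Laplace transform of the energy solution is in $H^1(\R^3)^3$ and satisfies \eqref{eq:Lippmann-Schwinger}. Its restriction to $B_R$ is bounded, since the right side of \eqref{eq:Lippmann-Schwinger} is bounded on $B_R$ by \eqref{eq:L2-Linf} applied to $\bfG+s\bfF-s^2q\widetilde{\bfu}\in L^2(Q)$. Uniqueness of the fixed point in $X$ then forces equality.

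For the expansion I iterate. First, \eqref{eq:first-order-operator-bound} together with the contraction bound gives $\widetilde{\bfu}=\bfU_0+O_X(s)$, and indeed
\[
 \widetilde{\bfu}=\bfU_0+s\bfU_1+O_X(s^2),
\]
because the term $s^2\cB_s\widetilde{\bfu}$ is $O(s^2)$. Next, substitute this back:
\[
 s^2\cB_s\widetilde{\bfu}=s^2\bfGamma_s*(q\bfU_0)+O_X(s^3).
\]
By \eqref{eq:first-order-operator-bound} applied to $\bfh=q\bfU_0$, we have
\[
 \bfGamma_s*(q\bfU_0)=\bfGamma_0*(q\bfU_0)+O_X(s),
\]
where $\|q\bfU_0\|_{L^2(Q)}\le K_q|Q|^{1/2}C_RK_G$. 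Finally, apply \eqref{eq:operator-expansion} to $\bfh=\bfG$ up to order $s^2$, and to $s\bfF$ up to order $s$ (using \eqref{eq:first-order-operator-bound}). The $s^2$ coefficient collects $\bfK_2*\bfG-\gamma_1\int_Q\bfF$ and $-\bfGamma_0*(q\bfU_0)$, which is exactly \eqref{eq:U2}. Each remainder is bounded by $C_*s^3$ with $C_*$ polynomial in $K_q,K_F,K_G$. This uses $\|\bfh\|_{L^1(Q)}\le|Q|^{1/2}\|\bfh\|_{L^2(Q)}$.

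The main obstacle is the identification step. The variational solution is a priori only $H^1$, so one must check that it is locally bounded. One must also check that convolution with $\bfGamma_s$ of the $L^2$ source genuinely reproduces it, which requires decay of $\bfGamma_s$ and the symbol \eqref{eq:Gamma-symbol}. I would rely on Appendix~\ref{app:variational} for the equivalence and then bound its right side via \eqref{eq:L2-Linf}. The rest is bookkeeping with uniform constants.
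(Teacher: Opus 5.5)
Your proof is correct and follows essentially the paper's argument. It uses the same smallness condition $s_*^2C_RK_q\le\tfrac12$ on $s^2\cB_s$, the kernel expansion of the free term, the bound $\|\cB_s-\cB_0\|=O(s)$ from \eqref{eq:first-order-operator-bound}, and Appendix~\ref{app:variational} for the identification; the only difference is that you bootstrap the fixed-point equation twice instead of writing the exact Neumann identity \eqref{eq:inverse-expansion}, which gives the same $O(s^3)$ remainder with the same parameter dependence.
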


\begin{proof}
Define on $L^\infty(B_R)^3$
\[
 \cB_s\bfv:=\bfGamma_s*(q\bfv).
\]
Lemma~\ref{lem:kernel-expansion} gives a uniform operator bound for $\cB_s$.  Hence
$\mathrm{Id}+s^2\cB_s$ is invertible for small $s$.  The exact identity
\begin{equation}
\label{eq:inverse-expansion}
 (\mathrm{Id}+s^2\cB_s)^{-1}-(\mathrm{Id}-s^2\cB_s)
 =s^4\cB_s^2(\mathrm{Id}+s^2\cB_s)^{-1}
\end{equation}
shows directly that the remainder is $O(s^4)$ uniformly in operator norm on every bounded
coefficient class.  The kernel expansion gives
\begin{align}
 \bfGamma_s*(\bfG+s\bfF)
 ={}&\bfGamma_0*\bfG
 +s\left(\bfGamma_0*\bfF-\gamma_1\int_Q\bfG(y)\,dy\right)
 \nonumber\\
 &+s^2\left(\bfK_2*\bfG-\gamma_1\int_Q\bfF(y)\,dy\right)
 +O_{L^\infty(B_R)}(s^3).
\label{eq:free-expansion}
\end{align}
Furthermore, \eqref{eq:first-order-operator-bound} implies
$\|\cB_s-\cB_0\|=O(s)$, where
$\cB_0\bfv=\bfGamma_0*(q\bfv)$.  Multiplying
\eqref{eq:inverse-expansion} and \eqref{eq:free-expansion}, one has
\begin{equation*}
 -s^2\cB_s\bfU_0
 =-s^2\cB_0\bfU_0-s^2(\cB_s-\cB_0)\bfU_0
 =-s^2\cB_0\bfU_0+O_{L^\infty(B_R)}(s^3),
\end{equation*}
while $-s^3\cB_s\bfU_1=O_{L^\infty(B_R)}(s^3)$ and the inverse remainder in
\eqref{eq:inverse-expansion} is $O(s^4)$ in operator norm.  Thus the only order-$s^2$
contribution from the inverse is $-s^2\cB_0\bfU_0$, which proves
\eqref{eq:field-expansion}--\eqref{eq:U2}.

We record the uniformity because it is needed in the difference expansion below.  If
$\|q\|_{L^\infty(Q)}\le K_q$, then \eqref{eq:B-bound} gives
$\|\cB_s\|\le C_RK_q$ for $0<s<s_0$.  Hence $s_*$ may be chosen so that
$s_*^2C_RK_q\le1/2$.  The convolution estimates yield
\[
 \|\bfU_0\|_{L^\infty(B_R)}\le C_RK_G,
 \qquad
 \|\bfU_1\|_{L^\infty(B_R)}\le C_R(K_F+K_G),
\]
and, using \eqref{eq:U2},
\[
 \|\bfU_2\|_{L^\infty(B_R)}
 \le C_R(K_G+K_F+K_qK_G).
\]
The remainder in the free expansion is bounded by $C_Rs^3(K_F+K_G)$, while the omitted
terms in the Neumann product contain either this factor or at least $s^4\|\cB_s\|^2$.
Thus all order-$s^3$ contributions are bounded by
$C_*s^3$ with $C_*$ depending only on the quantities stated in the proposition.  The
positivity hypothesis \eqref{eq:field-positive-density} identifies the fixed point with the
physical variational solution through Appendix~\ref{app:variational}.
\end{proof}

\subsection{Exterior equality and proof of Theorem~\ref{thm:weighted}}

\begin{lemma}
\label{lem:exterior-equality}
If \eqref{eq:data-equality} holds, then for every $s>0$,
\begin{equation*}
 \widetilde{\bfu}_1(\cdot,s)=\widetilde{\bfu}_2(\cdot,s)
 \quad\text{in }\R^3\setminus\overline\Om.
\end{equation*}
\end{lemma}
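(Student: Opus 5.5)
The plan is to compare the two time-domain fields in the exterior region $E:=\R^3\setminus\overline\Om$, where both densities equal $\rho_0$ and both initial data vanish, and then pass to the Laplace transform.

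First set $\bfw:=\bfu_1-\bfu_2$. By Proposition~\ref{prop:forward}, $\bfw\in C([0,\infty);H^1(\R^3)^3)\cap C^1([0,\infty);L^2(\R^3)^3)$. Since $\overline Q\subset\Om$, in $E$ the field $\bfw$ satisfies
\[
 \rho_0\pa_t^2\bfw-\cL_{\lambda,\mu}\bfw=\bm{0}
\]
weakly, with $\bfw(\cdot,0)=\pa_t\bfw(\cdot,0)=\bm{0}$ in $E$. The data equality \eqref{eq:data-equality} gives $\bfw|_{\pa\Om\times(0,\infty)}=\bm{0}$. Thus $\bfw|_E$ is a finite-energy solution of the exterior homogeneous Dirichlet problem for the constant-coefficient Lam\'e system with zero Cauchy data.

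To show $\bfw|_E=\bm{0}$, I would use an energy argument. Realize the exterior Dirichlet operator by the form $a_E$, the analogue of $a$ on $H^1_0(E)^3$. The function $\bfw|_E$ lies in $H^1_0(E)^3$ for each $t$, because its trace on $\pa\Om$ vanishes and $\pa\Om$ is Lipschitz.

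The main obstacle is coercivity. For $\lambda+2\mu>0$ with possibly $\lambda<0$, the integrand is not pointwise positive. However, extending elements of $H^1_0(E)$ by zero into $\Om$ embeds them in $H^1(\R^3)$, and on this image $a_E$ coincides with $a$. Hence \eqref{eq:form-Fourier} still yields nonnegativity and the Korn-type bound
\[
 a_E(\bfv,\bfv)\ge c_{\mathrm{el}}\|\nabla\bfv\|^2_{L^2},
\]
so $a_E$ defines a nonnegative self-adjoint operator. Uniqueness of energy solutions for the abstract second-order Cauchy problem in $L^2(E,\rho_0\,dx)^3$ with zero data then gives $\bfw=\bm{0}$ in $E$ for all $t$.

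To make this rigorous without regularity issues, I would verify directly that $\bfw|_E$ is the weak solution in the sense of the Lions framework, tested against $H^1_0(E)$ fields extended by zero. I would then invoke the standard uniqueness result, or run the Ladyzhenskaya time-integrated energy trick with the test function $\int_t^T\bfw$.

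A cleaner alternative works directly at the Laplace level. Take the Laplace transform, justified by \eqref{eq:H1-growth}, to get $\widetilde\bfw(\cdot,s)\in H^1(\R^3)^3$. By \eqref{eq:laplace-variational} restricted to test functions supported in $E$, we have
\[
 a_E(\widetilde\bfw,\bfv)+s^2\rho_0(\widetilde\bfw,\bfv)_{L^2(E)}=0
 \qquad\text{for all }\bfv\in H^1_0(E)^3.
\]
The trace of $\widetilde\bfw$ on $\pa\Om$ is the Laplace transform of the zero trace, so $\widetilde\bfw|_E\in H^1_0(E)^3$. Choosing $\bfv=\widetilde\bfw|_E$ and using the coercivity above together with $s^2\rho_0>0$ forces $\widetilde\bfw=\bm{0}$ in $E$.

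This Lax--Milgram argument avoids time-domain uniqueness entirely. I would present it as the proof, with the extension-by-zero coercivity as the key remark.
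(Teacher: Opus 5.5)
Your proposal is correct and is essentially the paper's proof. The paper likewise commutes the trace with the Bochner--Laplace integral and puts the difference in $H^1_0$ of the exterior. It then extends that difference by zero to $\bfW^e\in H^1(\R^3)^3$ and tests the exterior equation with it, giving $a(\bfW^e,\bfW^e)+\rho_0s^2\|\bfW^e\|_{L^2}^2=0$, where nonnegativity of $a$ comes from \eqref{eq:form-Fourier} rather than pointwise positivity, exactly as in your extension-by-zero remark.
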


\begin{proof}
The continuous trace map commutes with the Bochner--Laplace integral, so the transformed
fields have the same Dirichlet trace on $\pa\Om$.  Their difference $\bfW$ satisfies
\[
 (\cA_{\lambda,\mu}+\rho_0s^2)\bfW=\bm{0}
 \quad\text{in }\Om^e:=\R^3\setminus\overline\Om,
 \qquad
 \bfW|_{\pa\Om}=\bm{0}.
\]
For an exterior Lipschitz domain, the zero-trace characterization of $H^1_0$ gives
$\bfW\in H^1_0(\Om^e)^3$.  Its zero extension $\bfW^e$ therefore belongs to
$H^1(\R^3)^3$.  The weak exterior equation, initially tested with compactly supported
fields, extends by density to $H^1_0(\Om^e)^3$.  Taking $\bfW$ itself as test field gives
\[
 a(\bfW^e,\bfW^e)+\rho_0s^2\|\bfW^e\|_{L^2(\R^3)}^2=0.
\]
Both terms are nonnegative by \eqref{eq:form-Fourier}; therefore $\bfW=\bm{0}$.
\end{proof}

\begin{proof}[Proof of Theorem~\ref{thm:weighted}]
Fix an arbitrary $R>0$ with $\overline\Om\subset B_R$.  Write
$\bfF_j=\rho_j\bff_j$ and $\bfG_j=\rho_j\bfg_j$.
Proposition~\ref{prop:field-expansion} gives, uniformly on bounded sets,
\begin{equation}
\label{eq:first-two-general}
 \widetilde{\bfu}_j
 =\bfGamma_0*\bfG_j
 +s\left(\bfGamma_0*\bfF_j-\gamma_1\int_Q\bfG_j(y)\,dy\right)
 +O(s^2).
\end{equation}
By Lemma~\ref{lem:exterior-equality}, the left-hand sides agree in the exterior.  Letting
$s\downarrow0$ in \eqref{eq:first-two-general} gives
\[
 \bfGamma_0*(\bfG_1-\bfG_2)=\bm{0}
 \quad\text{in }(\R^3\setminus\overline\Om)\cap B_R.
\]
Since $R$ is arbitrary, this identity holds in all of
$\R^3\setminus\overline\Om$.
Lemma~\ref{lem:potential-orthogonality} and the structure
\[
 \bfG_1-\bfG_2
 =(\bfG_1^{\flat}-\bfG_2^{\flat})(x')\psi_g(x_3)
\]
reduce the difference to Lemma~\ref{lem:completeness}; hence
$\bfG_1=\bfG_2$.  In particular, their vector means agree.

Subtract the common zeroth-order term in \eqref{eq:first-two-general}, divide by $s$, and
let $s\downarrow0$.  The mean terms cancel, and we obtain
\[
 \bfGamma_0*(\bfF_1-\bfF_2)=\bm{0}
 \quad\text{in }(\R^3\setminus\overline\Om)\cap B_R.
\]
Again, arbitrariness of $R$ extends the identity to the whole exterior.
The same potential-orthogonality and completeness argument, now with $\psi_f$, gives
$\bfF_1=\bfF_2$.
\end{proof}

\section{Density identities and uniqueness consequences}
\label{sec:density}

Assume throughout this section that the hypotheses of Theorem~\ref{thm:weighted} and
the data equality \eqref{eq:data-equality} hold.  Let $\bfF,\bfG$ be the resulting common
weighted initial states, let $q_j=\rho_j-\rho_0$, and set
\[
 \delta\rho:=\rho_1-\rho_2=q_1-q_2.
\]
The fields $\bfU_0$ and $\bfU_1$ are those in \eqref{eq:common-U01}.

\subsection{The two static density identities}

\begin{proposition}
\label{prop:s2-density}
One has
\begin{equation}
\label{eq:s2-potential-zero}
 \bfGamma_0*(\delta\rho\,\bfU_0)=\bm{0}
 \quad\text{in }\R^3\setminus\overline\Om,
\end{equation}
and therefore
\begin{equation}
\label{eq:I0}
 \int_Q\delta\rho(x)\bfU_0(x)\cdot\bfv(x)\,dx=0
 \qquad\forall\bfv\in\cH_{\lambda,\mu}.
\end{equation}
In particular,
\begin{equation}
\label{eq:moment-U0}
 \int_Q\delta\rho(x)\bfU_0(x)\,dx=\bm{0}.
\end{equation}
\end{proposition}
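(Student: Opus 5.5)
We compare the two transformed fields through order $s^2$, using Proposition~\ref{prop:field-expansion} with the common weighted states $\bfF,\bfG$ supplied by Theorem~\ref{thm:weighted}. Fix $R$ with $\overline\Om\subset B_R$. Both densities satisfy \eqref{eq:basic-admissibility}, so $\|q_j\|_{L^\infty}\le\max(\rho_+-\rho_0,\rho_0-\rho_-)$. Both source pairs coincide, so the uniform remainder bound $C_*s^3$ applies to $j=1,2$ with the same constants. The expansion \eqref{eq:field-expansion} gives
\[
 \widetilde{\bfu}_j=\bfU_0+s\bfU_1+s^2\bfU_2^{(j)}+O_{L^\infty(B_R)}(s^3),
 \qquad
 \bfU_2^{(j)}=\bfK_2*\bfG-\gamma_1\int_Q\bfF\,dy-\bfGamma_0*(q_j\bfU_0).
\]
The fields $\bfU_0$ and $\bfU_1$, and the first two terms of $\bfU_2^{(j)}$, depend only on $(\bfF,\bfG)$. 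Therefore
\[
 \widetilde{\bfu}_1-\widetilde{\bfu}_2=-s^2\,\bfGamma_0*(\delta\rho\,\bfU_0)+O_{L^\infty(B_R)}(s^3).
\]

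By Lemma~\ref{lem:exterior-equality}, the left-hand side vanishes in $\R^3\setminus\overline\Om$. We divide by $s^2$ and let $s\downarrow0$. This yields $\bfGamma_0*(\delta\rho\,\bfU_0)=\bm{0}$ on $(\R^3\setminus\overline\Om)\cap B_R$. Since $R$ is arbitrary, \eqref{eq:s2-potential-zero} follows on the whole exterior.

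Next we apply Lemma~\ref{lem:potential-orthogonality} with $\bfh=\delta\rho\,\bfU_0$. We must check that $\bfh\in L^2(\R^3)^3$ with essential support in $\overline Q$. Here $\delta\rho\in L^\infty$ vanishes outside $Q$, and $\bfU_0=\bfGamma_0*\bfG$ is bounded on $B_R$ by \eqref{eq:L2-Linf} with $s$-independent constants; equivalently, it is the Kelvin potential of an $L^2$ compactly supported source. The lemma then gives \eqref{eq:I0}.

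Finally, every constant vector lies in $\cH_{\lambda,\mu}$. Testing \eqref{eq:I0} with $\bfv=e_k$ for $k=1,2,3$ gives \eqref{eq:moment-U0}.

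The main obstacle is ensuring that the $O(s^3)$ remainders are genuinely uniform and independent of $j$, so that the difference cancels at orders $s^0$ and $s^1$. This is exactly what the uniformity clause of Proposition~\ref{prop:field-expansion} provides. A secondary point is that $\bfU_0$ is real-valued and bounded, so that $\bfh$ is an admissible source for Lemma~\ref{lem:potential-orthogonality}.
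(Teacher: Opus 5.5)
Your proof is correct and follows the paper's argument essentially step for step. You subtract the two order-$s^2$ expansions from Proposition~\ref{prop:field-expansion} with the common $(\bfF,\bfG)$, use Lemma~\ref{lem:exterior-equality}, divide by $s^2$, extend to the whole exterior via arbitrary $R$, and then apply Lemma~\ref{lem:potential-orthogonality} and test with constant fields. The only difference is emphasis: for a fixed pair of configurations, uniformity in $j$ is not actually the crux. Each expansion has its own $O(s^3)$ remainder, and the difference of two such remainders is again $O(s^3)$.
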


\begin{proof}
Fix an arbitrary $R>0$ with $\overline\Om\subset B_R$.  For $j=1,2$,
Proposition~\ref{prop:field-expansion} gives
\begin{align}
 \widetilde{\bfu}_j
 ={}&\bfU_0+s\bfU_1
 \nonumber\\
 &+s^2\left[
 \bfK_2*\bfG-\gamma_1\int_Q\bfF(y)\,dy-\bfGamma_0*(q_j\bfU_0)
 \right]
 +O_{L^\infty(B_R)}(s^3).
\label{eq:uj-through-s2}
\end{align}
The two transformed fields agree on the exterior by
Lemma~\ref{lem:exterior-equality}.  Subtracting \eqref{eq:uj-through-s2}, dividing by
$s^2$, and letting $s\downarrow0$ gives \eqref{eq:s2-potential-zero} on
$(\R^3\setminus\overline\Om)\cap B_R$.  Since $R$ is arbitrary, it holds in the whole
exterior.  Moreover, $\delta\rho\,\bfU_0\in L^2(\R^3)^3$ and
$\esssupp(\delta\rho\,\bfU_0)\subset\overline Q$.  Hence
Lemma~\ref{lem:potential-orthogonality} gives
\eqref{eq:I0}.  Taking the three constant static fields yields \eqref{eq:moment-U0}.
\end{proof}

The next identity requires one additional order.  Its proof starts from the exact difference
of the transformed integral equations.

\begin{lemma}
\label{lem:difference-expansion}
Fix $R>0$ with $\overline\Om\subset B_R$ and numbers $K_q,K_F,K_G>0$.  There exist
$s_R=s_R(R,Q,\lambda,\mu,\rho_0,K_q)>0$ and
$C_R=C_R(R,Q,\lambda,\mu,\rho_0,K_q,K_F,K_G)>0$ such that, whenever
\[
 \|q_j\|_{L^\infty(Q)}\le K_q,\qquad
 \|\bfF\|_{L^2(Q)}\le K_F,\qquad
 \|\bfG\|_{L^2(Q)}\le K_G,
\]
one has
\begin{align}
 \widetilde{\bfu}_1-\widetilde{\bfu}_2
 ={}&-s^2\bfGamma_0*(\delta\rho\,\bfU_0)
 \nonumber\\
 &-s^3\left[
 \bfGamma_0*(\delta\rho\,\bfU_1)
 -\gamma_1\int_Q\delta\rho(y)\bfU_0(y)\,dy
 \right]
 +\bfR_R(s),
\label{eq:difference-s3}
\end{align}
where
\[
 \|\bfR_R(s)\|_{L^\infty(B_R)}\le C_Rs^4,
 \qquad 0<s<s_R.
\]
\end{lemma}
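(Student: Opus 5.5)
Write $\cB_s^{(j)}\bfv:=\bfGamma_s*(q_j\bfv)$ and $\bfw:=\widetilde{\bfu}_1-\widetilde{\bfu}_2$. The plan is to work from the exact difference of the two Lippmann--Schwinger equations \eqref{eq:Lippmann-Schwinger}, restricted to $B_R$. Since the weighted states $\bfF,\bfG$ are common, the free terms $\bfGamma_s*(\bfG+s\bfF)$ cancel. Writing $q_1\widetilde{\bfu}_1-q_2\widetilde{\bfu}_2=\delta\rho\,\widetilde{\bfu}_1+q_2\bfw$, we get the exact identity
\[
 (\mathrm{Id}+s^2\cB_s^{(2)})\bfw=-s^2\bfGamma_s*(\delta\rho\,\widetilde{\bfu}_1).
\]
With $s_R$ chosen so that $s_R^2C_RK_q\le1/2$ as in Proposition~\ref{prop:field-expansion}, the operator $\mathrm{Id}+s^2\cB_s^{(2)}$ is invertible on $L^\infty(B_R)^3$ with norm at most $2$. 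The identity \eqref{eq:inverse-expansion} then gives
\[
 \bfw=-s^2\bfGamma_s*(\delta\rho\,\widetilde{\bfu}_1)+s^4\cB_s^{(2)}\bfGamma_s*(\delta\rho\,\widetilde{\bfu}_1)+O(s^6).
\]
The second term is $O(s^4)$ uniformly, by \eqref{eq:B-bound} and \eqref{eq:L2-Linf}, together with $\|\widetilde{\bfu}_1\|_{L^\infty(B_R)}\le C$. That bound on $\widetilde{\bfu}_1$ comes from Proposition~\ref{prop:field-expansion}, and $\|\delta\rho\|_{L^\infty}\le2K_q$.

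Next, substitute the expansion \eqref{eq:field-expansion} for $\widetilde{\bfu}_1$, that is, $\widetilde{\bfu}_1=\bfU_0+s\bfU_1+O(s^2)$ in $L^\infty(B_R)$. Under the prefactor $s^2$, the $O(s^2)$ remainder contributes $O(s^4)$ by \eqref{eq:B-bound}. This leaves
\[
 -s^2\bfGamma_s*(\delta\rho\,\bfU_0)-s^3\bfGamma_s*(\delta\rho\,\bfU_1)+O(s^4).
\]
For the $s^3$ term it suffices to replace $\bfGamma_s$ by $\bfGamma_0$, because \eqref{eq:first-order-operator-bound} costs $O(s)$, giving $O(s^4)$ overall.

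For the $s^2$ term, apply the kernel expansion \eqref{eq:operator-expansion}, in its version through order $s^2$, with $\bfh=\delta\rho\,\bfU_0\in L^2$ supported in $\overline Q$. This gives
\[
 \bfGamma_s*\bfh=\bfGamma_0*\bfh-s\gamma_1\int_Q\bfh+O(s^2)\|\bfh\|_{L^1}.
\]
Multiplying by $-s^2$ produces exactly $-s^2\bfGamma_0*(\delta\rho\,\bfU_0)+s^3\gamma_1\int_Q\delta\rho\,\bfU_0$, plus $O(s^4)$. This matches \eqref{eq:difference-s3} with the stated signs.

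It remains to track the constants. Every remainder is bounded by products of $C_R$, $K_q$, $K_F$, $K_G$, using $\|\bfU_0\|_\infty\le C_RK_G$ and $\|\bfU_1\|_\infty\le C_R(K_F+K_G)$. Hence $s_R$ depends only on $K_q$ (through the Neumann series), and $C_R$ depends on all of $K_q,K_F,K_G$, as claimed.

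The main obstacle is the bookkeeping needed to get a genuine $O(s^4)$, rather than $O(s^3)$, uniformly. One might expect the $s^2$ prefactor times the $O(s^3)$ remainder of Proposition~\ref{prop:field-expansion} to be needed. In fact only the $O(s^2)$ version of the field expansion is required, because the difference already carries $s^2$. The other delicate point is that the inverse correction must enter through the exact resolvent identity rather than the approximate Neumann expansion. One also has to check that the first-order kernel remainder \eqref{eq:first-order-operator-bound} gives a uniform $L^\infty(B_R)$ bound $C_Rs^2\|\bfh\|_{L^2(Q)}$ for the bounded, compactly supported $\bfh$ used here.
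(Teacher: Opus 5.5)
Your proof is correct and follows the paper's route: the exact difference of the two Lippmann--Schwinger equations, the field expansion used only through order $s$ (uniform $O(s^2)$ remainder) under the $s^2$ prefactor, and then the first-order kernel bound. The only difference is your splitting $\delta\rho\,\widetilde{\bfu}_1+q_2\bfw$ followed by the Neumann inversion; this step is harmless but unnecessary. The paper instead expands both $\widetilde{\bfu}_j$, which share $\bfU_0$ and $\bfU_1$, and so obtains $q_1\widetilde{\bfu}_1-q_2\widetilde{\bfu}_2=\delta\rho\,\bfU_0+s\,\delta\rho\,\bfU_1+O(s^2)$ directly.
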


\begin{proof}
The free term in \eqref{eq:Lippmann-Schwinger} is common to both configurations.  Hence
\begin{equation}
\label{eq:exact-difference}
 \widetilde{\bfu}_1-\widetilde{\bfu}_2
 =-s^2\bfGamma_s*
  \left(q_1\widetilde{\bfu}_1-q_2\widetilde{\bfu}_2\right).
\end{equation}
By Proposition~\ref{prop:field-expansion}, uniformly under the bounds in the statement,
\[
 \widetilde{\bfu}_j
 =\bfU_0+s\bfU_1+s^2\bfU_{2,j}
  +O_{L^\infty(B_R)}(s^3),
\]
where
\[
 \bfU_{2,j}
 =\bfK_2*\bfG-\gamma_1\int_Q\bfF(y)\,dy-\bfGamma_0*(q_j\bfU_0).
\]
Consequently,
\begin{equation}
\label{eq:difference-source-expansion}
 q_1\widetilde{\bfu}_1-q_2\widetilde{\bfu}_2
 =\bfh_0+s\bfh_1+s^2\bfZ_s,
 \qquad
 \bfh_0:=\delta\rho\,\bfU_0,
 \quad
 \bfh_1:=\delta\rho\,\bfU_1,
\end{equation}
with
\[
 \bfZ_s=q_1\bfU_{2,1}-q_2\bfU_{2,2}+O_{L^2(Q)}(s).
\]
The bounds in Proposition~\ref{prop:field-expansion} imply
\begin{equation}
\label{eq:rs-uniform-bound}
 \sup_{0<s<s_R}\|\bfZ_s\|_{L^2(Q)}
 \le C(R,Q,\lambda,\mu,\rho_0,K_q,K_F,K_G).
\end{equation}
In the same way, $\|\bfh_0\|_{L^2(Q)}+\|\bfh_1\|_{L^2(Q)}$ is bounded by a constant with
the same parameter dependence.

Apply \eqref{eq:first-order-operator-bound} to the source in
\eqref{eq:difference-source-expansion}.  We obtain
\begin{align*}
 \bfGamma_s*(\bfh_0+s\bfh_1+s^2\bfZ_s)
 ={}&\bfGamma_0*(\bfh_0+s\bfh_1+s^2\bfZ_s)\\
 &-s\gamma_1\int_Q(\bfh_0(y)+s\bfh_1(y)+s^2\bfZ_s(y))\,dy
 +\bfE_s^{\sharp},
\end{align*}
where, by \eqref{eq:first-order-operator-bound} and the uniform bound
\eqref{eq:rs-uniform-bound}, the remainder satisfies
\[
 \|\bfE_s^{\sharp}\|_{L^\infty(B_R)}
 \le C_Rs^2\|\bfh_0+s\bfh_1+s^2\bfZ_s\|_{L^2(Q)}
 \le C_Rs^2.
\]
Moreover, \eqref{eq:L2-Linf} and the boundedness of $Q$ give
\[
 s^2\|\bfGamma_0*\bfZ_s\|_{L^\infty(B_R)}
 +s^2\left|\int_Q\bfh_1(y)\,dy\right|
 +s^3\left|\int_Q\bfZ_s(y)\,dy\right|
 \le C_Rs^2.
\]
Therefore
\[
 \bfGamma_s*
 \left(q_1\widetilde{\bfu}_1-q_2\widetilde{\bfu}_2\right)
 =\bfGamma_0*\bfh_0
 +s\left(\bfGamma_0*\bfh_1-\gamma_1\int_Q\bfh_0(y)\,dy\right)
 +O_{L^\infty(B_R)}(s^2),
\]
with a constant depending only on the parameters displayed in the lemma.  Multiplying by
the prefactor $-s^2$ in \eqref{eq:exact-difference} proves
\eqref{eq:difference-s3} and the uniform $O(s^4)$ bound.
\end{proof}

\begin{proposition}
\label{prop:s3-density}
One has
\begin{equation}
\label{eq:s3-potential-zero}
 \bfGamma_0*(\delta\rho\,\bfU_1)=\bm{0}
 \quad\text{in }\R^3\setminus\overline\Om,
\end{equation}
and therefore
\begin{equation}
\label{eq:I1}
 \int_Q\delta\rho(x)\bfU_1(x)\cdot\bfv(x)\,dx=0
 \qquad\forall\bfv\in\cH_{\lambda,\mu}.
\end{equation}
\end{proposition}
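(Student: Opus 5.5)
The plan is to read off the coefficient of $s^3$ in Lemma~\ref{lem:difference-expansion} on the exterior, in the same way the $s^2$ coefficient was used in Proposition~\ref{prop:s2-density}. Fix $R>0$ with $\overline\Om\subset B_R$. Choose $K_q:=\max(\rho_+-\rho_0,\rho_0-\rho_-)$, so that both $\|q_j\|_{L^\infty(Q)}\le K_q$, and take $K_F:=\|\bfF\|_{L^2(Q)}$, $K_G:=\|\bfG\|_{L^2(Q)}$ for the common weighted states. With these choices the remainder $\bfR_R(s)$ in \eqref{eq:difference-s3} is bounded by $C_Rs^4$ in $L^\infty(B_R)$ for $0<s<s_R$.

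By Lemma~\ref{lem:exterior-equality}, the left-hand side of \eqref{eq:difference-s3} vanishes on $(\R^3\setminus\overline\Om)\cap B_R$ for every $s>0$. By \eqref{eq:s2-potential-zero} of Proposition~\ref{prop:s2-density}, the $s^2$ term $-s^2\bfGamma_0*(\delta\rho\,\bfU_0)$ also vanishes there. By \eqref{eq:moment-U0}, the constant vector $\gamma_1\int_Q\delta\rho\,\bfU_0\,dy$ inside the $s^3$ bracket is zero. What remains on the exterior part of $B_R$ is
\[
 \bm{0}=-s^3\,\bfGamma_0*(\delta\rho\,\bfU_1)+\bfR_R(s).
\]
Divide by $s^3$ and let $s\downarrow0$. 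The remainder contributes at most $C_Rs$, so $\bfGamma_0*(\delta\rho\,\bfU_1)=\bm{0}$ on $(\R^3\setminus\overline\Om)\cap B_R$. Since $R$ is arbitrary, this gives \eqref{eq:s3-potential-zero} on the whole exterior.

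For \eqref{eq:I1}, apply Lemma~\ref{lem:potential-orthogonality} with $\bfh=\delta\rho\,\bfU_1$. Its hypotheses hold: $\delta\rho\in L^\infty$ is supported in $\overline Q$, and $\bfU_1\in L^\infty(B_R)$ by \eqref{eq:L2-Linf} (the Kelvin potential of an $L^2$ source plus a constant vector). Hence $\bfh\in L^2(\R^3)^3$ with $\esssupp\bfh\subset\overline Q$, and the lemma yields \eqref{eq:I1}.

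No step here is genuinely difficult, because the uniform $O(s^4)$ remainder has already been proved in Lemma~\ref{lem:difference-expansion}. The only point that needs care is checking that the lower-order terms really cancel. The $s^2$ term vanishes only on the exterior, not inside $B_R$, so the argument must be run on the exterior part of $B_R$. The mean term is removed by the vector moment identity \eqref{eq:moment-U0}, which is where the constant static fields enter.
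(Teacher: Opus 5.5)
Your proposal is correct and follows the paper's own proof step for step. You restrict \eqref{eq:difference-s3} to $(\R^3\setminus\overline\Om)\cap B_R$, remove the $s^2$ term with Proposition~\ref{prop:s2-density} and the mean term with \eqref{eq:moment-U0}, divide by $s^3$, let $s\downarrow0$, let $R$ be arbitrary, and finish with Lemma~\ref{lem:potential-orthogonality}. Your explicit choice of $K_q,K_F,K_G$ and your check that $\delta\rho\,\bfU_1\in L^2$ with support in $\overline Q$ only make explicit what the paper leaves implicit; since the lemma asks for positive constants, take $K_F,K_G$ slightly larger than the norms in case these vanish.
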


\begin{proof}
Fix an arbitrary $R>0$ with $\overline\Om\subset B_R$.  By
Lemma~\ref{lem:difference-expansion}, the left-hand side of
\eqref{eq:difference-s3} is zero on the exterior.  The order-$s^2$
coefficient vanishes by Proposition~\ref{prop:s2-density}, and the constant vector in the
order-$s^3$ coefficient vanishes by \eqref{eq:moment-U0}.  Dividing by $s^3$ and letting
$s\downarrow0$ gives \eqref{eq:s3-potential-zero} on
$(\R^3\setminus\overline\Om)\cap B_R$.  Arbitrariness of $R$ gives the identity in the
whole exterior.  Since $\delta\rho\,\bfU_1\in L^2(\R^3)^3$ and
$\esssupp(\delta\rho\,\bfU_1)\subset\overline Q$,
Lemma~\ref{lem:potential-orthogonality} then yields \eqref{eq:I1}.
\end{proof}

\subsection{Finite-dimensional density families}

\begin{proof}[Proof of Theorem~\ref{thm:finite-rank}]
Write
\[
 \rho_j=\rho_0+\sum_{m=1}^M\alpha_{j,m}\phi_m,
 \qquad
 \delta\alpha_m=\alpha_{1,m}-\alpha_{2,m}.
\]
The identities \eqref{eq:I0} and \eqref{eq:I1}, tested with
$\bfv_1,\ldots,\bfv_L$, give
\[
 \mathbb M^{(0)}\delta\alpha=\bm{0},
 \qquad
 \mathbb M^{(1)}\delta\alpha=\bm{0}.
\]
Because $\delta\alpha\in\R^M$, these equations are equivalent to
$\mathbb M_{\R}\delta\alpha=\bm{0}$.  The real full-rank condition
\eqref{eq:real-full-rank} gives $\delta\alpha=\bm{0}$, hence $\rho_1=\rho_2$.
Theorem~\ref{thm:weighted} and positivity of the density give equality of the initial states.
\end{proof}

\subsection{The aligned multi-profile theorem}

\begin{lemma}
\label{lem:constant-vector-weight}
If $\bfF=\beta\bfG$ and $\bfM_G:=\int_Q\bfG(y)\,dy$, then
\begin{equation}
\label{eq:constant-weight-orthogonality}
 \int_Q\delta\rho(x)\bfM_G\cdot\bfv(x)\,dx=0
 \qquad\forall\bfv\in\cH_{\lambda,\mu}.
\end{equation}
\end{lemma}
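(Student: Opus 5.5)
The plan is to combine the two static density identities of Propositions~\ref{prop:s2-density} and \ref{prop:s3-density} linearly, using the alignment $\bfF=\beta\bfG$ to cancel the Kelvin potentials.

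First, by linearity of convolution, $\bfGamma_0*\bfF=\beta\,\bfGamma_0*\bfG=\beta\bfU_0$. Substituting into \eqref{eq:common-U01} gives
\[
 \bfU_1=\beta\bfU_0-\gamma_1\bfM_G,
 \qquad\text{that is,}\qquad
 \bfU_1-\beta\bfU_0=-\gamma_1\bfM_G,
\]
a constant vector on $Q$.

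Next, for any $\bfv\in\cH_{\lambda,\mu}$, subtract $\beta$ times \eqref{eq:I0} from \eqref{eq:I1}. This is legitimate because both integrals are finite: $\delta\rho\in L^\infty(Q)$, $\bfU_0,\bfU_1\in L^\infty(B_R)$ by \eqref{eq:L2-Linf}, and $\bfv$ is smooth, hence bounded on $\overline Q$. The result is
\[
 0=\int_Q\delta\rho\,(\bfU_1-\beta\bfU_0)\cdot\bfv\,dx
 =-\gamma_1\int_Q\delta\rho\,\bfM_G\cdot\bfv\,dx.
\]
Since $\gamma_1>0$ by \eqref{eq:gamma1}, we may divide by $-\gamma_1$, which yields \eqref{eq:constant-weight-orthogonality}.

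There is no real obstacle. The only points to check are that the same $\beta$ applies to both configurations, which holds because $\bfF$ and $\bfG$ are the common weighted states, and that the lemma needs no nonvanishing assumption on $\bfM_G$. The identity holds trivially if $\bfM_G=\bm{0}$; the condition $\bfM_G\ne\bm{0}$ is only used later, when this identity is combined with Lemma~\ref{lem:multi-profile-completeness}.
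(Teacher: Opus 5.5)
Your proposal is correct and follows the paper's proof: you write $\bfU_1=\beta\bfU_0-\gamma_1\bfM_G$, subtract $\beta$ times \eqref{eq:I0} from \eqref{eq:I1}, and divide by $\gamma_1>0$. Your added justifications are harmless. These are the finiteness of the integrals (for $\bfGamma_0$ the $L^\infty$ bound follows from $|\bfGamma_0(x)|\le C|x|^{-1}$, just as in \eqref{eq:L2-Linf}) and the remark that no nonvanishing of $\bfM_G$ is needed here.
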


\begin{proof}
By \eqref{eq:common-U01},
$\bfU_1=\beta\bfU_0-\gamma_1\bfM_G$.  Subtracting $\beta$ times \eqref{eq:I0} from
\eqref{eq:I1} and using $\gamma_1>0$ gives
\eqref{eq:constant-weight-orthogonality}.
\end{proof}

\begin{proof}[Proof of Theorem~\ref{thm:global}]
Take two triples in $\mathfrak A^{(N)}$ with the same passive data.
Theorem~\ref{thm:weighted}, applied with $\psi_f=\psi_g=\psi_{\mathrm{src}}$, gives common weighted
states $\bfF$ and $\bfG$.  Since each configuration satisfies
$\bfF_j=\beta_j\bfG_j$ and $\int_Q\bfG(y)\,dy\ne\bm{0}$, one has
$\beta_1=\beta_2=:\beta$.  Lemma~\ref{lem:constant-vector-weight} therefore applies with
the nonzero vector $\bfM_G=\int_Q\bfG(y)\,dy$.

Write
\[
 \rho_j(x)=\rho_0+\sum_{k=1}^Np_k^{(j)}(x')\psi_{\rho,k}(x_3)
\]
and set $\delta p_k=p_k^{(1)}-p_k^{(2)}$.  Then
\[
 \delta\rho(x)\bfM_G
 =\bfM_G\sum_{k=1}^N\delta p_k(x')\psi_{\rho,k}(x_3).
\]
The orthogonality \eqref{eq:constant-weight-orthogonality} and the corresponding alternative
of Lemma~\ref{lem:multi-profile-completeness} imply
$\delta p_1=\cdots=\delta p_N=0$.  Hence $\rho_1=\rho_2$.  Equality of the initial states
follows from equality of the weighted states and strict positivity of the common density.
\end{proof}

\begin{proof}[Proof of Proposition~\ref{prop:alignment-rigidity}]
Set $\bfH:=\bfF-c\bfG\in L^2_c(Q;\R^3)$.  By
\eqref{eq:common-U01},
\[
 \bfGamma_0*\bfH
 =\bfM\chi(x_3)+\bfC
 \quad\text{in }Q,
 \qquad
 \bfC:=\gamma_1\int_Q\bfG(y)\,dy.
\]
Applying $\cA_{\lambda,\mu}$ in distributions on $Q$ gives
\begin{equation}
\label{eq:rigidity-H}
 \bfH(x)
 =-\big(\mu\bfM+(\lambda+\mu)M_3e_3\big)\chi''(x_3).
\end{equation}
The constant vector multiplying $\chi''$ is nonzero: its horizontal part is
$\mu\bfM'$, while if $\bfM'=\bm{0}$ its third component is
$(\lambda+2\mu)M_3\ne0$.

Let $K'$ be the projection of $\esssupp\bfH$ onto $\R^2$.  Since
$\esssupp\bfH\subset D\times I$, one has $K'\subset D$ and hence
$\operatorname{dist}(K',\partial D)>0$.  Because $D$ is open and $K'$ is a proper compact
subset, there is a ball $B_0\subset D\setminus K'$.  Thus $\bfH=0$ in
$\mathcal D'(B_0\times I)^3$.  Choose $\vartheta_0\in C_c^\infty(B_0)$ with
$\int_{B_0}\vartheta_0\,dx'\ne0$.  Testing \eqref{eq:rigidity-H} against
$\vartheta_0(x')\vartheta_1(x_3)$, with arbitrary $\vartheta_1\in C_c^\infty(I)$, gives
\[
 -\big(\mu\bfM+(\lambda+\mu)M_3e_3\big)
 \left(\int_{B_0}\vartheta_0\,dx'\right)
 \langle\chi'',\vartheta_1\rangle=\bm{0}.
\]
The two prefactors are nonzero, so
$\langle\chi'',\vartheta_1\rangle=0$ for every $\vartheta_1$ and therefore
$\chi''=0$ in $\mathcal D'(I)$.  Equation \eqref{eq:rigidity-H} now gives
$\bfH=\bm{0}$, proving \eqref{eq:rigidity-alignment}.  Substitution into
\eqref{eq:common-U01} yields \eqref{eq:rigidity-constant}.  If
$\bfM\chi\not\equiv\bm{0}$, this constant is nonzero, and hence
$\int_Q\bfG(y)\,dy\ne\bm{0}$.
\end{proof}

If $\bfM_G=\bm{0}$, then $\bfU_1=\beta\bfU_0$ and the order-$s^3$ identity adds no
independent density direction.  The present theorem does not address that case.

\subsection{Conditional monotonicity}

\begin{proof}[Proof of Corollary~\ref{cor:monotonicity}]
Taking $\bfv=\bfv_*$ in \eqref{eq:I0} yields
\[
 \int_Q(\rho_1-\rho_2)(x)
       [\bfU_0(x)\cdot\bfv_*(x)]\,dx=0.
\]
The second factor is strictly positive a.e., while the first has one sign.  Thus
$\rho_1=\rho_2$ a.e. in $Q$, and the background condition gives equality on $\R^3$.
The initial states then agree by Theorem~\ref{thm:weighted}.
\end{proof}

The positive-probe hypothesis can be checked for a polarized nonnegative weighted velocity.

\begin{corollary}
Assume the hypotheses of Corollary~\ref{cor:monotonicity} except
\eqref{eq:positive-probe}.  If
\begin{equation*}
 \bfG(x)=\bfa G(x),
 \qquad
 \bfa\in\R^3\setminus\{\bm{0}\},
 \qquad
 G\ge0,
 \quad G\not\equiv0,
\end{equation*}
then \eqref{eq:positive-probe} holds with the constant field $\bfv_*\equiv\bfa$.
\end{corollary}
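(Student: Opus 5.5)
The plan is to compute $\bfU_0$ explicitly and show that $\bfU_0\cdot\bfa>0$ a.e. in $Q$. Since $\bfG=\bfa G$, we have
\[
 \bfU_0(x)=\int_Q\bfGamma_0(x-y)\bfa\,G(y)\,dy,
\]
and therefore
\[
 \bfU_0(x)\cdot\bfa=\int_Q G(y)\,\bfa^\top\bfGamma_0(x-y)\bfa\,dy .
\]
The constant field $\bfv_*\equiv\bfa$ belongs to $\cH_{\lambda,\mu}$, as noted at the start of Section~\ref{sec:static}.

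The key pointwise fact is that the Kelvin tensor \eqref{eq:kelvin} is positive definite for every $z\ne0$. Writing $\hat z=z/|z|$, we get
\[
 \bfa^\top\bfGamma_0(z)\bfa
 =\frac{1}{8\pi\mu(\lambda+2\mu)|z|}\Big[(\lambda+3\mu)|\bfa|^2+(\lambda+\mu)(\bfa\cdot\hat z)^2\Big].
\]
The bracket is affine in $t=(\bfa\cdot\hat z)^2\in[0,|\bfa|^2]$, so it suffices to check the endpoints. At $t=0$ it equals $(\lambda+3\mu)|\bfa|^2$. At $t=|\bfa|^2$ it equals $(2\lambda+4\mu)|\bfa|^2$. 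Both are positive because $\lambda+2\mu>0$ and $\mu>0$; for the first, note $\lambda+3\mu=(\lambda+2\mu)+\mu$. The prefactor is also positive since $\mu(\lambda+2\mu)>0$. Hence
\[
 \bfa^\top\bfGamma_0(z)\bfa\ge c\,|\bfa|^2|z|^{-1},
 \qquad c>0,
\]
for all $z\ne0$.

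It follows that for every $x$ the integrand $G(y)\,\bfa^\top\bfGamma_0(x-y)\bfa$ is nonnegative and strictly positive on the set $\{G>0\}$. This set has positive measure because $G\ge0$ and $G\not\equiv0$. The integral converges, since $G\in L^2$ has compact support and $|z|^{-1}$ is locally $L^2$, as in \eqref{eq:L2-Linf}. Therefore $\bfU_0(x)\cdot\bfa\ge c|\bfa|^2\int_Q G(y)|x-y|^{-1}\,dy>0$ for every $x\in Q$, not merely almost every $x$. This is exactly \eqref{eq:positive-probe} with $\bfv_*\equiv\bfa$.

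The only step requiring care is the sign of the Kelvin quadratic form when $\lambda<0$, since $\lambda+\mu$ may then be negative. The endpoint check in $t$ handles this using only $\mu>0$ and $\lambda+2\mu>0$. Everything else is the monotonicity of integrating a positive kernel against a nonnegative, nontrivial weight.
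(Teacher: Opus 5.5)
Your proposal is correct and matches the paper's proof: the paper computes the same Kelvin quadratic form $\bfa\cdot\bfGamma_0(z)\bfa$ and splits on the sign of $\lambda+\mu$. That split is exactly your endpoint check in $t=(\bfa\cdot\hat z)^2$, giving the lower bounds $(\lambda+3\mu)|\bfa|^2$ and $2(\lambda+2\mu)|\bfa|^2$, and the paper then integrates against the nonnegative, nontrivial weight $G$ to get $\bfa\cdot\bfU_0(x)>0$ for every $x$.
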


\begin{proof}
For $z\ne0$, the Kelvin formula \eqref{eq:kelvin} gives, with $e=z/|z|$,
\[
 \bfa\cdot\bfGamma_0(z)\bfa
 =\frac{(\lambda+3\mu)|\bfa|^2
       +(\lambda+\mu)(\bfa\cdot e)^2}
       {8\pi\mu(\lambda+2\mu)|z|}.
\]
If $\lambda+\mu\ge0$, the numerator is positive because
$\lambda+3\mu=(\lambda+2\mu)+\mu>0$.  If $\lambda+\mu<0$, it is bounded below by
$2(\lambda+2\mu)|\bfa|^2>0$.  Hence
\[
 \bfa\cdot\bfU_0(x)
 =\int_Q[\bfa\cdot\bfGamma_0(x-y)\bfa]G(y)\,dy>0
\]
for every $x$, since $G$ is nonnegative and nonzero on a set of positive measure.
\end{proof}

\section{Scope and open problems}
\label{sec:scope}

The argument first identifies the density-weighted initial state and then derives two static
Lam\'e transforms of the density difference.  Exact alignment converts a fixed linear
combination of those transforms into a constant-vector weight.  At each nonzero horizontal
frequency, the two normal roots give the profile-transform values at both signs.  When
$\lambda+\mu\ne0$, Lemma~\ref{lem:normal-pencil} shows that each root has partial
multiplicities $2$ and $1$; the exhibited length-two chain gives one additional
first-derivative equation.  Hence the two roots and their first generalized chains furnish
four scalar profile equations.  This explains the bound $N\le4$ for the present normal-chain
construction.  We do not claim that four is maximal among all possible globally defined
static Lam\'e probes.

The alignment $\bff=\beta\bfg$ remains a restrictive structural hypothesis.
Proposition~\ref{prop:alignment-rigidity} shows that the separated factorization
$\bfU_1-c\bfU_0=\bfM\chi(x_3)$ does not enlarge the class under the present
compact-support assumptions: it forces the same alignment.
Proposition~\ref{prop:general-density-obstruction} shows that the resulting constant-vector transform
has an infinite-dimensional kernel on unrestricted densities.  Neither statement rules out
larger identifiable classes based on the full spatially varying pair $(\bfU_0,\bfU_1)$.
A main open problem is injectivity of this two-weight transform on natural nonaligned
classes.

When $\lambda+\mu=0$, the normal matrix polynomial is
$\mu(r^2-|\eta|^2)I_3$ and the two roots are semisimple by
Lemma~\ref{lem:normal-pencil}.  The derivative-row construction is therefore unavailable,
but the one- and two-profile conclusions based on the pure value matrix remain valid for
two-sided Laplace nondegenerate profiles under the original ellipticity assumptions.  This
does not exclude other polynomial--exponential
static constructions outside the normal chains used here.

The limit $s\downarrow0$ uses the complete boundary history.  The present Laplace-domain
argument does not yield finite-time identification.  Stability is also unresolved at the
level of the passive map.  The gap \eqref{eq:Hermite-uniform-gap} controls only the
normalized profile matrix;
uniform control of the full static probing system additionally requires nondegenerate
polarization normalizations.  The smallest singular value in
Theorem~\ref{thm:finite-rank} likewise controls only the terminal finite-dimensional linear
system.  A remaining difficulty is stable extraction of the low-frequency coefficients
from noisy time-domain traces.

Further questions include the case $\int_Q\bfG(y)\,dy=\bm{0}$, higher Laplace orders
producing new source-dependent density weights, and simultaneous recovery when the Lam\'e
parameters are also unknown.  In the last problem, stiffness contrasts enter at the leading
order and the triangular reduction used here no longer applies.

\appendix

\section{Expansion of the modified Kupradze tensor}
\label{app:kernel}

We provide the kernel calculation underlying Lemma~\ref{lem:kernel-expansion}.  Let
$\varrho=|x|$.  To justify the differentiated remainder, write Taylor's formula in the form
\begin{equation*}
 e^{-z}
 =
 \sum_{m=0}^4\frac{(-z)^m}{m!}
 +
 z^5\mathfrak r_5(z),
 \qquad
 \mathfrak r_5(z)
 =
 -\frac1{4!}\int_0^1(1-\theta)^4e^{-\theta z}\,d\theta.
\end{equation*}
For every $z_0>0$,
\begin{equation}
\label{eq:r5-derivatives}
 \sup_{0\le z\le z_0}
 \left(
 |\mathfrak r_5(z)|
 +|\mathfrak r_5'(z)|
 +|\mathfrak r_5''(z)|
 \right)
 <\infty.
\end{equation}
Thus, for bounded $\varrho$ and small $\kappa$,
\begin{equation}
\label{eq:yukawa-series}
 \Phi_\kappa(x)
 =
 \frac1{4\pi\varrho}
 -\frac{\kappa}{4\pi}
 +\frac{\kappa^2\varrho}{8\pi}
 -\frac{\kappa^3\varrho^2}{24\pi}
 +\frac{\kappa^4\varrho^3}{96\pi}
 +
 \frac{\kappa^5\varrho^4}{4\pi}\mathfrak r_5(\kappa\varrho).
\end{equation}
We use the identities
\begin{equation*}
 \nabla\nabla^\top \varrho
 =
 \frac{I_3}{\varrho}-\frac{xx^\top}{\varrho^3},
 \qquad
 \nabla\nabla^\top \varrho^2=2I_3,
 \qquad
 \nabla\nabla^\top \varrho^3
 =
 3\varrho I_3+3\frac{xx^\top}{\varrho}.
\end{equation*}

Substitute \eqref{eq:yukawa-series} into \eqref{eq:modified-kupradze}.  The constant term
$1/(4\pi\varrho)$ cancels in the difference
$\Phi_{\kappa_{\mathrm S}}-\Phi_{\kappa_{\mathrm P}}$ before differentiation, while the terms linear in
$\kappa$ are spatial constants and disappear after the Hessian is applied.  Since
\begin{align*}
 \kappa_{\mathrm S}^2-\kappa_{\mathrm P}^2
 &={}
 \rho_0s^2
 \left[
 \mu^{-1}-(\lambda+2\mu)^{-1}
 \right]
 \\
 &={}
 \rho_0s^2
 \frac{\lambda+\mu}{\mu(\lambda+2\mu)},
\end{align*}
the order-$s^0$ coefficient is
\begin{align*}
 \frac{I_3}{4\pi\mu\varrho}
 -
 \frac{\lambda+\mu}{8\pi\mu(\lambda+2\mu)}
 \left(
 \frac{I_3}{\varrho}-\frac{xx^\top}{\varrho^3}
 \right).
\end{align*}
Collecting the coefficients of $I_3/\varrho$ gives
\[
 \frac{1}{4\pi\mu}
 -
 \frac{\lambda+\mu}{8\pi\mu(\lambda+2\mu)}
 =
 \frac{\lambda+3\mu}{8\pi\mu(\lambda+2\mu)},
\]
which proves the Kelvin formula \eqref{eq:kelvin}.

At order $s$, the first Yukawa term in \eqref{eq:modified-kupradze} contributes
\begin{equation}
\label{eq:linear-first}
 -s\frac{\sqrt{\rho_0}}{4\pi\mu^{3/2}}I_3.
\end{equation}
The cubic term in the Hessian difference contributes
\begin{equation}
\label{eq:linear-second}
 s\frac{\sqrt{\rho_0}}{12\pi}
 \left[
 \mu^{-3/2}-(\lambda+2\mu)^{-3/2}
 \right]I_3.
\end{equation}
Adding \eqref{eq:linear-first} and \eqref{eq:linear-second} yields
\[
 -s\frac{\sqrt{\rho_0}}{12\pi}
 \left[
 2\mu^{-3/2}+(\lambda+2\mu)^{-3/2}
 \right]I_3
 =
 -s\gamma_1 I_3,
\]
which proves \eqref{eq:gamma1}.

At order $s^2$, the first Yukawa term contributes
\[
 s^2\frac{\rho_0\varrho}{8\pi\mu^2}I_3.
\]
Using
$\nabla\nabla^\top \varrho^3=3\varrho I_3+3xx^\top/\varrho$, the quartic term in the Hessian part contributes
\[
 -s^2\frac{\rho_0}{32\pi}
 \left[
 \mu^{-2}-(\lambda+2\mu)^{-2}
 \right]
 \left(
 \varrho I_3+\frac{xx^\top}{\varrho}
 \right).
\]
Thus
\begin{equation}
\label{eq:K2}
 \bfK_2(x)
 =\frac{\rho_0|x|}{8\pi\mu^2}I_3
 -\frac{\rho_0}{32\pi}
 \left[\mu^{-2}-(\lambda+2\mu)^{-2}\right]
 \left(|x|I_3+\frac{xx^\top}{|x|}\right),
 \qquad x\ne0.
\end{equation}
Both terms tend to zero as $x\to0$, so $\bfK_2$ extends continuously by
$\bfK_2(0)=0$.

It remains to justify the remainder without differentiating an undifferentiated
big-$O$ term.  The first term of \eqref{eq:modified-kupradze} is not differentiated.
After the quadratic contribution has been retained, Taylor's theorem gives directly
\begin{equation}
\label{eq:first-yukawa-remainder}
 \left|
 \frac1\mu\Phi_{\kappa_{\mathrm S}}(x)I_3
 -
 \left[
 \frac{I_3}{4\pi\mu\varrho}
 -\frac{\kappa_{\mathrm S}}{4\pi\mu}I_3
 +\frac{\kappa_{\mathrm S}^2\varrho}{8\pi\mu}I_3
 \right]
 \right|
 \le
 C_Rs^3\varrho^2.
\end{equation}
For the Hessian term, define
\[
 R_\kappa(\varrho)
 :=
 \kappa^5\varrho^4\mathfrak r_5(\kappa\varrho).
\]
By \eqref{eq:r5-derivatives}, for $j=0,1,2$ and $0<\varrho\le2R$,
\begin{equation}
\label{eq:radial-remainder-derivatives}
 |R_\kappa^{(j)}(\varrho)|
 \le
 C_R\kappa^5\varrho^{4-j}.
\end{equation}
For a radial scalar function $f(\varrho)$,
\begin{equation*}
 \nabla\nabla^\top f(\varrho)
 =
 f''(\varrho)\frac{xx^\top}{\varrho^2}
 +
 \frac{f'(\varrho)}\varrho
 \left(
 I_3-\frac{xx^\top}{\varrho^2}
 \right).
\end{equation*}
Hence \eqref{eq:radial-remainder-derivatives} implies
\begin{equation*}
 \left|
 \nabla\nabla^\top R_\kappa(\varrho)
 \right|
 \le
 C_R\kappa^5\varrho^2.
\end{equation*}
Apply this estimate to $\kappa_{\mathrm S}$ and $\kappa_{\mathrm P}$ and take the difference.
Multiplication by $(\rho_0s^2)^{-1}$ gives an $O(s^3\varrho^2)$ remainder for the Hessian
part.  Together with
\eqref{eq:first-yukawa-remainder}, this proves
\[
 |\bfE_s(x)|\le C_Rs^3\varrho^2,
 \qquad
 0<\varrho\le2R.
\]
Finally,
\[
 \|\bfE_s*\bfh\|_{L^\infty(B_R)}
 \le
 C_Rs^3
 \sup_{x\in B_R}\int_Q|x-y|^2|\bfh(y)|\,dy
 \le
 C_Rs^3\|\bfh\|_{L^1(Q)},
\]
and the corresponding bounds for the displayed lower-order kernels give
\eqref{eq:operator-expansion}.  This proves Lemma~\ref{lem:kernel-expansion}.

\section{Equivalence of the variational and integral formulations}
\label{app:variational}

We verify that the fixed-point solution used in the low-frequency expansion coincides with the
Bochner--Laplace transform of the energy solution.

Fix $s>0$.  Let $L_s:H^1(\R^3)^3\to H^{-1}(\R^3)^3$ denote the operator induced by the
coercive form
\[
 a(\bfw,\bfv)+\rho_0s^2(\bfw,\bfv)_{L^2};
\]
in distributional notation, $L_s=\cA_{\lambda,\mu}+\rho_0s^2$.  Lax--Milgram gives an
isomorphism
\[
 L_s:H^1(\R^3)^3\longrightarrow H^{-1}(\R^3)^3.
\]
For $\bfh\in L^2(\R^3)^3$, the Fourier multiplier formula
\eqref{eq:Gamma-symbol} also gives the elliptic estimate
\begin{equation}
\label{eq:Ls-H2}
 \|L_s^{-1}\bfh\|_{H^2(\R^3)}
 \le
 C_s\|\bfh\|_{L^2(\R^3)}.
\end{equation}
The two multiplier eigenvalues are
$(\mu|\xi|^2+\rho_0s^2)^{-1}$ and
$((\lambda+2\mu)|\xi|^2+\rho_0s^2)^{-1}$.  For fixed $s>0$, both
\[
 \frac{1+|\xi|^2}{\mu|\xi|^2+\rho_0s^2}
 \quad\text{and}\quad
 \frac{1+|\xi|^2}{(\lambda+2\mu)|\xi|^2+\rho_0s^2}
\]
are bounded functions of $\xi$.  On compactly supported $L^2$ data,
$L_s^{-1}\bfh=\bfGamma_s*\bfh$.

Let $\bfv\in L^\infty(B_R)^3$ be a fixed point of the contraction equation.  Since
$q$ is bounded with $\esssupp q\subset\overline Q\subset B_R$,
\[
 \bfh_{\bfv}
 :=
 \bfG+s\bfF-s^2q\bfv
 \in L^2(\R^3)^3,
 \qquad
 \esssupp\bfh_{\bfv}\subset\overline Q.
\]
Here and below, $q\bfv$ denotes the function on $\R^3$ equal to $q(x)\bfv(x)$ on $Q$
and zero outside $Q$; thus no extension of $\bfv$ away from $Q$ is used.
Define globally
\[
 \bfU:=\bfGamma_s*\bfh_{\bfv}.
\]
By \eqref{eq:Ls-H2}, $\bfU\in H^2(\R^3)^3\subset H^1(\R^3)^3$, and by local Sobolev
embedding in three dimensions $\bfU\in L^\infty_{\mathrm{loc}}$.  The fixed-point identity says
that $\bfU=\bfv$ on $B_R$, in particular on $Q$.  Hence
\[
 L_s\bfU
 =
 \bfG+s\bfF-s^2q\bfU
\]
globally, so $\bfU$ satisfies the variational equation \eqref{eq:laplace-variational}.

Conversely, let $\widetilde{\bfu}\in H^1(\R^3)^3$ be the unique solution of \eqref{eq:laplace-variational}.
Because $q\in L^\infty(\R^3)$, $\esssupp q\subset\overline Q$, and
$\widetilde{\bfu}\in L^2(\R^3)^3$,
\[
 \bfh
 :=
 \bfG+s\bfF-s^2q\widetilde{\bfu}
 \in L^2(\R^3)^3,
 \qquad
 \esssupp\bfh\subset\overline Q.
\]
Thus
\[
 \widetilde{\bfu}
 =
 L_s^{-1}\bfh
 =
 \bfGamma_s*\bfh
 \in H^2(\R^3)^3,
\]
and in particular its restriction to $B_R$ belongs to $L^\infty(B_R)^3$ and satisfies the
same fixed-point equation.  For sufficiently small $s$, that fixed point is unique by
Proposition~\ref{prop:field-expansion}.  Hence, for sufficiently small $s$, the variational
solution, the Lippmann--Schwinger fixed point, and the Bochner--Laplace transform of the
time-domain energy solution all coincide.

\section*{Declaration on the use of generative AI}
Generative AI tools were used for language editing, organization, and mechanical
consistency checks.  The authors independently verified all mathematical arguments,
calculations, and references and take full responsibility for the final manuscript.

\section*{Acknowledgments}
The research of Yixian Gao was supported by the NSFC (12371187) and the STDPP of
Jilin Province (20240101006JJ).  The research of Hongyu Liu was supported by the
Hong Kong Research Grants Council General Research Fund (g11311122, 11304224,
and 11300821), the NSFC/RGC Joint Research Scheme (N\_CityU101/21), and the ANR/RGC
Joint Research Scheme (A\_CityU203/19).  The research of Yang Liu was supported by the
NSFC (12401554) and the STDPP of Jilin Province (20260102252JC).


\begin{thebibliography}{99}

\bibitem{BaoHuKianYin2018}
G.~Bao, G.~Hu, Y.~Kian, and T.~Yin,
\emph{Inverse source problems in elastodynamics},
Inverse Problems \textbf{34} (2018), 045009;
\href{https://doi.org/10.1088/1361-6420/aaaf7e}{doi:10.1088/1361-6420/aaaf7e}.

\bibitem{BellassouedImanuvilovYamamoto2008}
M.~Bellassoued, O.~Imanuvilov, and M.~Yamamoto,
\emph{Inverse problem of determining the density and two Lam\'e coefficients by boundary data},
SIAM J. Math. Anal. \textbf{40} (2008), 238--265;
\href{https://doi.org/10.1137/070679971}{doi:10.1137/070679971}.

\bibitem{Bhattacharyya2018}
S.~Bhattacharyya,
\emph{Local uniqueness of the density from partial boundary data for isotropic elastodynamics},
Inverse Problems \textbf{34} (2018), 125001;
\href{https://doi.org/10.1088/1361-6420/aade76}{doi:10.1088/1361-6420/aade76}.

\bibitem{BhattacharyyaDeHoopKatsnelsonUhlmann2022SIIMS}
S.~Bhattacharyya, M.~V.~de Hoop, V.~Katsnelson, and G.~Uhlmann,
\emph{Recovery of piecewise smooth density and Lam\'e parameters from high-frequency exterior Cauchy data},
SIAM J. Imaging Sci. \textbf{15} (2022), 1910--1943;
\href{https://doi.org/10.1137/22M1480951}{doi:10.1137/22M1480951}.

\bibitem{BlastenLin2019}
E.~Bl{\aa}sten and Y.-H.~Lin,
\emph{Radiating and non-radiating sources in elasticity},
Inverse Problems \textbf{35} (2019), 015005;
\href{https://doi.org/10.1088/1361-6420/aae99e}{doi:10.1088/1361-6420/aae99e}.

\bibitem{ChenJiangLiuLoTao2026}
L.~Chen, Y.~Jiang, H.~Liu, C.~W.~K.~Lo, and L.~Tao,
\emph{Determining evolutionary equations from a single passive boundary observation},
preprint, arXiv:2505.08473, 2026.

\bibitem{DeHoopNakamuraZhai2017}
M.~V.~de Hoop, G.~Nakamura, and J.~Zhai,
\emph{Reconstruction of Lam\'e moduli and density at the boundary enabling directional elastic wavefield decomposition},
SIAM J. Appl. Math. \textbf{77} (2017), 520--536;
\href{https://doi.org/10.1137/16M1073406}{doi:10.1137/16M1073406}.

\bibitem{DeHoopNakamuraZhai2019}
M.~V.~de Hoop, G.~Nakamura, and J.~Zhai,
\emph{Unique recovery of piecewise analytic density and stiffness tensor from the elastic-wave Dirichlet-to-Neumann map},
SIAM J. Appl. Math. \textbf{79} (2019), 2359--2384;
\href{https://doi.org/10.1137/18M1232802}{doi:10.1137/18M1232802}.

\bibitem{Feizmohammadi2025}
A.~Feizmohammadi,
\emph{Reconstruction of 1D evolution equations and their initial data from one passive measurement},
SIAM J. Math. Anal. \textbf{57} (2025), 5089--5106;
\href{https://doi.org/10.1137/25M1728533}{doi:10.1137/25M1728533}.

\bibitem{GaoLiuLiu2023}
Y.~Gao, H.~Liu, and Y.~Liu,
\emph{On an inverse problem for the plate equation with passive measurement},
SIAM J. Appl. Math. \textbf{83} (2023), 1196--1214;
\href{https://doi.org/10.1137/22M1499881}{doi:10.1137/22M1499881}.

\bibitem{GohbergLancasterRodman2009}
I.~Gohberg, P.~Lancaster, and L.~Rodman,
\emph{Matrix Polynomials},
Classics in Applied Mathematics 58, Society for Industrial and Applied Mathematics,
Philadelphia, 2009;
\href{https://doi.org/10.1137/1.9780898719024}{doi:10.1137/1.9780898719024}.

\bibitem{HuKian2020}
G.~Hu and Y.~Kian,
\emph{Uniqueness and stability for the recovery of a time-dependent source in elastodynamics},
Inverse Probl. Imaging \textbf{14} (2020), 463--487;
\href{https://doi.org/10.3934/ipi.2020022}{doi:10.3934/ipi.2020022}.

\bibitem{ImanuvilovYamamoto2005}
O.~Y.~Imanuvilov and M.~Yamamoto,
\emph{Carleman estimates for the non-stationary Lam\'e system and the application to an inverse problem},
ESAIM Control Optim. Calc. Var. \textbf{11} (2005), 1--56;
\href{https://doi.org/10.1051/cocv:2004030}{doi:10.1051/cocv:2004030}.

\bibitem{KianLiu2025}
Y.~Kian and H.~Liu,
\emph{Uniqueness and stability in determining the wave equation from a single passive boundary measurement},
preprint, arXiv:2507.10012, 2025.

\bibitem{KianUhlmann2025}
Y.~Kian and G.~Uhlmann,
\emph{Determination of the sound speed and an initial source in photoacoustic tomography},
Trans. Amer. Math. Soc. \textbf{378} (2025), 5329--5353;
\href{https://doi.org/10.1090/tran/9467}{doi:10.1090/tran/9467}.

\bibitem{KnoxMoradifam2020}
C.~Knox and A.~Moradifam,
\emph{Determining both the source of a wave and its speed in a medium from boundary measurements},
Inverse Problems \textbf{36} (2020), 025002;
\href{https://doi.org/10.1088/1361-6420/ab53fc}{doi:10.1088/1361-6420/ab53fc}.

\bibitem{LiuUhlmann2015}
H.~Liu and G.~Uhlmann,
\emph{Determining both sound speed and internal source in thermo- and photo-acoustic tomography},
Inverse Problems \textbf{31} (2015), 105005;
\href{https://doi.org/10.1088/0266-5611/31/10/105005}{doi:10.1088/0266-5611/31/10/105005}.

\bibitem{McLean2000}
W.~McLean,
\emph{Strongly Elliptic Systems and Boundary Integral Equations},
Cambridge University Press, Cambridge, 2000;
\href{https://doi.org/10.1017/CBO9780511546335}{doi:10.1017/CBO9780511546335}.

\bibitem{Moradifam2026}
A.~Moradifam,
\emph{Simultaneous recovery of the initial source and sound speed for the wave equation under a constitutive constraint},
preprint, arXiv:2607.19799, 2026.

\bibitem{Rachele2003}
L.~V.~Rachele,
\emph{Uniqueness of the density in an inverse problem for isotropic elastodynamics},
Trans. Amer. Math. Soc. \textbf{355} (2003), 4781--4806;
\href{https://doi.org/10.1090/S0002-9947-03-03268-9}{doi:10.1090/S0002-9947-03-03268-9}.

\bibitem{StefanovUhlmann2013}
P.~Stefanov and G.~Uhlmann,
\emph{Recovery of a source term or a speed with one measurement and applications},
Trans. Amer. Math. Soc. \textbf{365} (2013), 5737--5758;
\href{https://doi.org/10.1090/S0002-9947-2013-05703-0}{doi:10.1090/S0002-9947-2013-05703-0}.

\bibitem{Tittelfitz2012}
J.~Tittelfitz,
\emph{Thermoacoustic tomography in elastic media},
Inverse Problems \textbf{28} (2012), 055004;
\href{https://doi.org/10.1088/0266-5611/28/5/055004}{doi:10.1088/0266-5611/28/5/055004}.

\bibitem{UhlmannZhai2024}
G.~Uhlmann and J.~Zhai,
\emph{Determination of the density in a nonlinear elastic wave equation},
Math. Ann. \textbf{390} (2024), 2825--2858;
\href{https://doi.org/10.1007/s00208-024-02797-w}{doi:10.1007/s00208-024-02797-w}.

\bibitem{Zhai2026}
J.~Zhai,
\emph{Determination of the density in the linear elastic wave equation},
J. Differential Equations \textbf{459} (2026), 114064;
\href{https://doi.org/10.1016/j.jde.2025.114064}{doi:10.1016/j.jde.2025.114064}.

\end{thebibliography}
\end{document}